\newtheorem{theorem}{Theorem}[section]
\newtheorem{corollary}{Corollary}[section]
\newtheorem{lemma}{Lemma}[section]
\theoremstyle{remark}
\newtheorem{remark}{Remark}[section]
\newcommand{\eps}{\varepsilon}
\newcommand{\ESMM}[1]{E^{#1}_{\mathrm{sMM}}}
\newcommand{\ESMMb}[2]{E^{#1,#2}_{\mathrm{sMM}}}
\newcommand{\EKWC}[1]{E^{#1}_{\mathrm{KWC}}}
\newcommand{\dist}{\operatorname{dist}}
\newcommand{\graphto}{\xrightarrow{g}}
\DeclareMathOperator{\cl}{cl}
\newcommand{\abs}[1]{\left|#1\right|}
\newcommand{\odfrac}[2]{\frac{\mathrm{d}#1}{\mathrm{d}#2}}
\newcommand{\id}{\,\mathrm{d}}
\DeclareMathOperator*{\liminfstar}{lim\,inf_{\lower 3pt \hbox to 3pt{\tiny $\ast$}}}
\DeclareMathOperator*{\limsupstar}{lim\,sup^*}
\newcommand{\ringM}{\mathring{M}}
\newcommand{\graph}{\operatorname{graph}}
\numberwithin{equation}{section}
\begin{document}

\title[A finer singular limit]{A finer singular limit of a single-well Modica--Mortola functional
 and its applications to the Kobayashi--Warren--Carter energy}
\author{Yoshikazu~Giga}
\address{Graduate School of Mathematical Sciences \\
The University of Tokyo \\
3-8-1 Komaba Meguro-ku Tokyo 153-8914, Japan}
\email{labgiga@ms.u-tokyo.ac.jp}

\author{Jun~Okamoto}
\address{Graduate School of Mathematical Sciences \\
The University of Tokyo \\
3-8-1 Komaba Meguro-ku Tokyo 153-8914, Japan}
\email{okamoto@ms.u-tokyo.ac.jp}

\author{Masaaki~Uesaka}
\address{Arithmer, Inc. \\
1-6-1 Roppongi Minato-ku Tokyo 106-6040 \\
and Graduate School of Mathematical Sciences \\
The University of Tokyo \\
3-8-1 Komaba Meguro-ku Tokyo 153-8914, Japan, \\
}
\email{masaaki.uesaka@arithmer.co.jp \\ muesaka@ms.u-tokyo.ac.jp}
\keywords{Gamma convergence;
Modica--Mortola functional; Kobayashi--Warren--Carter energy}
\maketitle

\begin{abstract}
An explicit representation of the Gamma limit of a single-well Modica--Mortola functional is given for one-dimensional space under the graph convergence which is finer than conventional $L^1$-convergence or convergence in measure.
 As an application, an explicit representation of a singular limit of the Kobayashi--Warren--Carter energy, which is popular in materials science, is given.
 Some compactness under the graph convergence is also established.
 Such formulas as well as compactness is useful to characterize the limit of minimizers the Kobayashi--Warren--Carter energy.
 To characterize the Gamma limit under the graph convergence, a new idea which is especially useful for one-dimensional problem is introduced.
 It is a change of parameter of the variable by arc-length parameter of its graph, which is called unfolding by the arc-length parameter in this paper.
\end{abstract}

\section{Introduction} \label{intro} 

In this paper, we are interested in a singular limit called the Gamma limit of a single-well Modica--Mortola functional under the graph convergence, the convergence with respect to the Hausdorff distance of graphs, which is finer than conventional $L^1$-convergence or convergence in measure.
 A single-well Modica--Mortola functional is introduced by Ambrosio and Tortorelli~\cite{AT1,AT2} to approximate the Mumford--Shah functional~\cite{MS}.
 A typical explicit form of their functional now called the Ambrosio--Tortorelli functional is
\[
	\mathcal{E}^\eps(u,v) 
	:= \sigma \int_\Omega v^2 \abs{\nabla u}^2 \id x
	+ \lambda \int_\Omega (u-g)^2 \id x
	+ E^\eps(v)
\]
with small parameter $\eps > 0$, where $E^\eps$ is a single-well Modica--Mortola functional of the form
\[
	E^\eps(v) 
	:= \frac{1}{2\eps} \int_\Omega (v-1)^2 \id x
	+ \frac{\eps}{2} \int_\Omega \abs{\nabla v}^2 \id x.
\]
Here $g$ is a given function defined in a bounded domain $\Omega$ in $\mathbf{R}^n$ and $\sigma \geq 0$, $\lambda \geq 0$ are a given parameters.
 The potential energy part $(v-1)^2$ is a single-well potential.
 If it is replaced by a double-well potential like $(v^2-1)^2$, the corresponding energy $E^\eps$ well approximates (a constant multiple of) the surface area of the interface and this observation went back to Modica and Mortola~\cite{MM1,MM2}.
 Even for the single-well potential if $v$ is close to zero around some interface then it is expected that $E^\eps$ still approximates {the surface area of the interface}.
 This observation enables us to prove that
 for $\sigma>0$, the Gamma limit of $\mathcal{E}^\eps(u,v)$ in the convergence in measure is a Mumford--Shah functional; see \cite{AT1,AT2,FL}.

If $E^\eps(v_\eps)$ is bounded for small $\eps > 0$, then it is rather clear that $v_\eps\to 1$ in $L^1$ as $\eps\to 0$,
so that $v_{\eps'}\to 1$ almost everywhere by taking a suitable subsequence.
 Therefore, it seems natural to consider the Gamma convergence in $L^1$-sense.
 However, if one considers
\begin{equation}\label{eq:Eepsb}
	E^\eps_b(v) = E^\eps(v) + b v(0)^2
\end{equation}
for $b>0$, where $\Omega=(-1,1)$, then we see $L^1$-convergence is too weak because in the limit stage, the effect of the term involving $b$ is invisible but this should be counted.

To illustrate the point, we calculate the unique minimizer $w_\eps$ of $E^\eps_b(v)$, that is,
\[
	E^\eps_b(w_\eps) 
	= \min \left\{ E^\eps_b(v) \mid v \in H^1(-1,1) \right\}.
\]
This is strict convex problem so that the minimizer exists and unique.
 Moreover, its Euler--Lagrange equation is linear.
 A simple manipulation shows that the minimizer of $E^\eps_b$ with the Neumann boundary conditions $w_\eps'(\pm 1) = 0$ is given by
\[
	w_{\eps}(x)=1+\frac{b\left(-e^{-\frac{2}{\eps}}-1\right)}{1- e^{-\frac{4}{\eps}}+b\left(1+e^{-\frac{2}{\eps}}\right)^{2}} e^{-\frac{|x|}{\eps}}+\frac{ b\left(-e^{-\frac{2}{\eps}}-e^{-\frac{4}{\eps}}\right)}{1- e^{-\frac{4}{\eps}}+b\left(1+e^{-\frac{2}{\eps}}\right)^{2}} e^{\frac{|x|}{\eps}}.
\]
It converges to $1$ locally uniformly outside zero but
\[
	\lim_{\eps\to 0} w_\eps(0) = \frac{1}{1+b} > 0
\]
and
\[
    \lim_{\eps \to 0} E^\eps_b(w_\eps) = \left(\frac{b}{1+b}\right)^2 < b.
\]
Since $E^\eps_b(1)=b$ for any $\eps > 0$, the information that $w_\eps(x) \to 1$ almost everywhere is insufficient to identify the behavior of minimizers $w_\eps$.
\begin{figure}
    \centering
    \includegraphics[width=\linewidth]{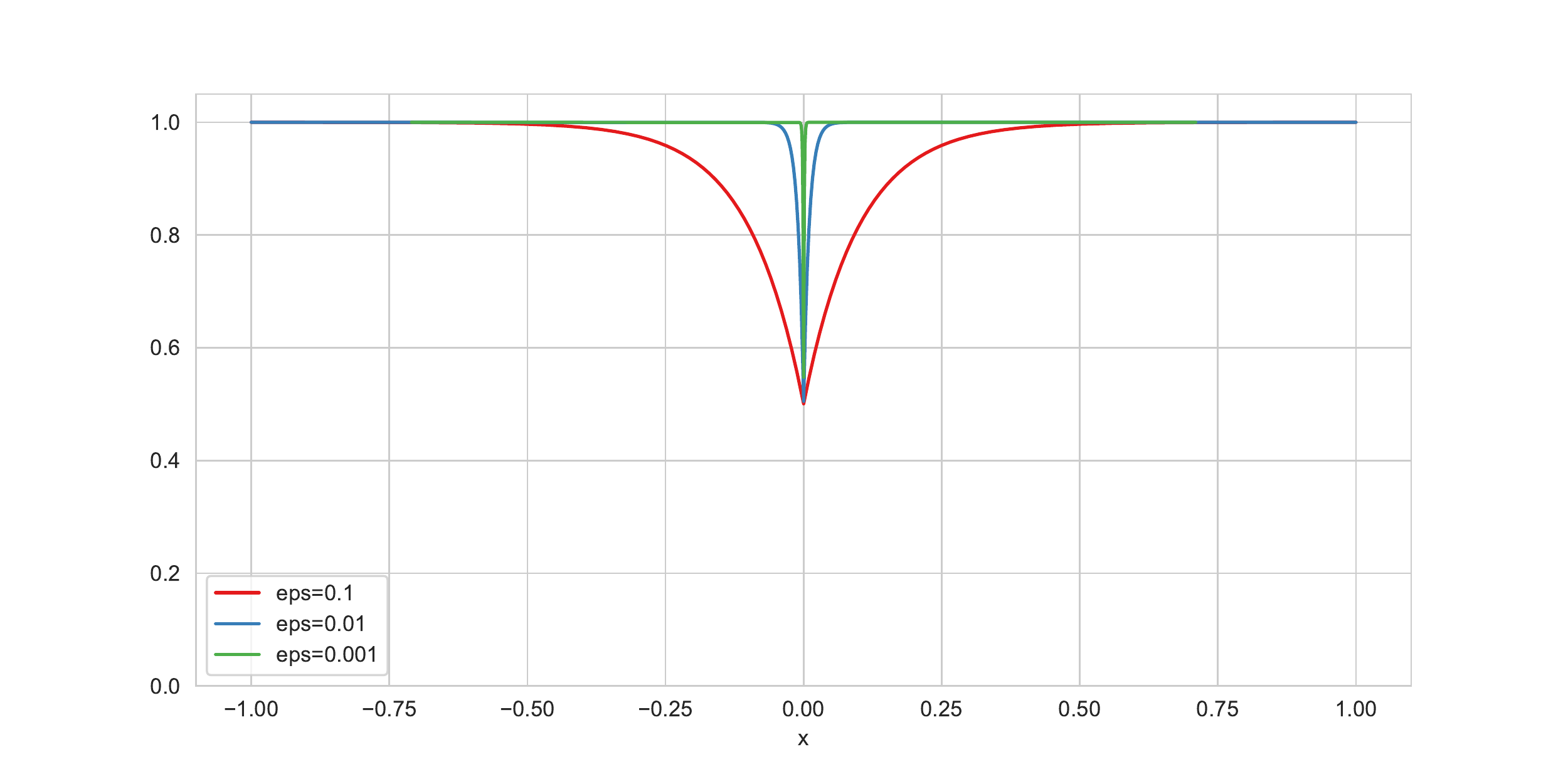}
    \caption{The graphs of $w_\eps$ as the minimizers of $E^\eps_b$ defined by~\eqref{eq:Eepsb} when $b=1$ and $\eps = 10^{-1}, 10^{-2}, 10^{-3}$.}
    \label{fig:graph_Eepsb}
\end{figure}

We show the graph of $w_\eps$ for several $\eps > 0$ in Figure~\ref{fig:graph_Eepsb}.
We see that the graph of $w_\eps$ is dropping sharply at $x=0$ and its sharpness increases as $\eps \to 0$.
Hence, it is natural to consider the graph convergence of $w_\eps$ and its limit is a set-valued function $\Xi$ so that $\Xi(x)=\{1\}$ for $x\neq 0$ and $\Xi(0) = \left[1/(1+b),1\right]$.

Our first goal is to give an explicit representation formula for the Gamma limit of $E^\eps_b$
under the graph convergence as well as compactness.
We discuss such problems only in one-dimensional domain
since the problem is already complicated.
The graph convergence enables us to characterize the limit of above $w_\eps$
as a minimizer of the Gamma limit of $E^\eps_b$.
 
Our second goal is to give an explicit representation formula
for the Gamma limit of the Kobayashi--Warren--Carter energy.
A typical form of the energy is
\[
	\EKWC{\eps}(u,v) = \sigma \int_\Omega v^2 \abs{\nabla u} \id x + E^\eps(v).
\]
This energy is first proposed by \cite{KWC00,KWC98} to model motion of multi-phase problems in materials sciences.
 This energy looks similar to the Ambrosio--Tortorelli functional $\mathcal{E}^\eps$.
 It is obtained by inhomogenizing Dirichlet energy $\int|\nabla u|^2 \id x$ by putting weights $\int v^2 |\nabla u|^2 \id x$ with a single-well Modica--Mortola functional.
 By this observation, we call $\mathcal{E}^\eps$ an Ambrosio--Tortorelli inhomogenization of the Dirichlet energy when $\lambda=0$.
 From this point of view, the Kobayashi--Warren--Carter energy is interpreted as an Ambrosio--Tortorelli inhomogenization of the total variation.
 It turns out that natural topology for studying the limit of functionals as $\eps \to 0$ is quite different.
 
 For the Ambrosio--Tortorelli functional, it is enough to consider $L^1 \times L^1$ converges
 since $v_\eps(x) \to 1$ except finitely many points where $\liminfstar v_\eps(x)=0$
 if one assumes that $\mathcal{E}^\eps(u_\eps,v_\eps)$ is bounded and $u_\eps \to u$, $v_\eps \to v$ in $L^1$. (see \cite{AT1,AT2,FL}.) 
 Here $\liminfstar$ denotes the relaxed liminf and we shall give its definition in Section~\ref{sec:2}.
 For the Kobayashi--Warren--Carter energy, however, 
 the situation is quite different.
 Indeed, if one considers
\begin{eqnarray*}
	u(x) = \left\{ 
\begin{array}{lc}
	1, & 0<x<1 \\
	0, & -1<x<0,
\end{array} \right.
\end{eqnarray*}
then $\EKWC{\eps}(u,v)=E^\eps_\sigma(v)$ with $\Omega=(-1,1)$.
 Thus the natural convergence for $v$ must be in the graph convergence
 as we discussed before.
 Note that in our problem $v_\eps \to 1$ except countably many points
 and there $\liminfstar v_\eps$ may not be zero.
 One merit of the graph convergence is that
 it is very strong so when we consider the Gamma limit problem,
 we don't need to restrict ourselves in the space of special $BV$ functions as for the Ambrosio--Tortorelli functional.
 
Our first main result is a characterization of the Gamma limit of $E^\eps_b$ in the graph convergence (Theorem~\ref{G1}).
To show the Gamma convergence,
we need to prove the two types of inequalities often called liminf and limsup inequalities.
To show liminf inequality,
a key point is to study a general behavior near the set $\Sigma$ of all exponential points of the limit set-valued function $\Xi$;
here, we say a point $x$ is exceptional if $\Xi(x)$ is not a singleton.
To describe behavior near $\Sigma$, a conventional method is to find a suitable accumulating sequence as in~\cite[proof of Proposition 3.3]{FL}.
However, unfortunately, it seems that this argument does not apply to our setting,
since $\Sigma$ can be a countably infinite set.
Thus we are forced to introduce a new method to show liminf inequality.
When we study a absolutely continuous function $u^\eps$ on a bounded interval $I$,
that is, $u^\eps \in W^{1,1}(I)$,
we associate its unfolding $U^\eps$ by replacing the variable
by the arc-length parameter of the graph.
Namely, we set
\[
	U^\eps(s) = u^\eps \left( x^\eps(s) \right), \quad
	s \in J^\eps = s^\eps(I),
\]
where $x^\eps = x^\eps(s)$ in the inverse function of the arc-length parameter
\[
	s^\eps(x) = \int^x_0 \left( 1+ \left(u^\eps_x(z) \right)^2 \right)^{1/2} dz.
\]
If the total variation of $u^\eps$ is bounded,
then the length of $J^\eps$ is bounded as $\eps \to 0$.
The unfolding $U^\eps$ has several merits {compared with} the original one.
First, $\{U^\eps\}$ and $\{x^\eps\}$ are uniformly Lipschitz with constant $1$.
Second, the total variation of $U^\eps$ and $u^\eps$ is the same as expected.
It is easy to study the convergence as $\eps \to 0$ of unfolding $U^\eps$ compared with the original $u^\eps$.
Among other results, we are able to characterize the relaxed limits $\liminfstar u^\eps$, $\limsupstar u^\eps$ by the limit of $U^\eps$ and $x^\eps$.
We use this unfolding for $(v_\eps-1)^2/2$ in the case of $E^\eps_b$ to show liminf inequalities, 
where $\{v_\eps\}$ is a given sequence with a bound for $E^\eps_b(v_\eps)$.
The proof for limsup inequalities is not difficult although one has to be careful that there are countably many points where the limit of $v^\eps$ is not equal to one.
 
We also established a compactness under the graph convergence with a bound for $E^\eps_b$ (Theorem~\ref{C1}).
This can be easily proved by use of unfoldings.

Based on results on $E^\eps_b$,
we are able to prove the Gamma convergence of the Kobayashi--Warren--Carter energy  $\EKWC{\eps}$ under the graph convergence (Theorem~\ref{G2}).
 If $u$ is a piecewise constant but has a countably many jump points $\{a_\ell\}^\infty_{\ell=1}\subset\Omega$ with positive jump $\{b_\ell\}^\infty_{\ell=1}$,
 we see that
\[
	E^\eps_{\mathrm{KWC}}(u, v) = E^\eps(v) + \sigma \sum^\infty_{\ell=1} b_\ell v^2(a_\ell).
\]
The Gamma limit for such fixed $u$ is easily reduced to the results of $E^\eps_b$.
However, to establish liminf inequality for $\EKWC{\eps}$ for both $u_\eps$ and $v_\eps$,
we have to establish some lower estimate for a sequence $\int_\Omega v^2_\eps \abs{\nabla u_\eps} \id x$ as $\eps \to 0$,
which is an additional difficulty.
However, we still do not need to use {\textit{SBV}} space here.

The Gamma convergence problem of the Modica--Mortola functional,
which is the sum of Dirichlet type energy and potential energy was first studied by~\cite{MM1}.
Since then, there is a large number of works discussing the Gamma convergence.
However, the topology is either $L^1$ or convergence in measure.
In our Gamma limit, the topology is the graph convergence, which is finer than previous study.
In~\cite{MM2}, the $L^1$ Gamma limit of a double-well Modica--Mortola functional is characterized as a number of transition points in one-dimensional setting.
Later in~\cite{Mo,St}, it was extended to multi-dimensional setting and the limit is a constant multiple of the surface area of the transition interface.
This type of the Gamma convergence results as well as compactness is important to establish the convergence of local minimizer (\cite{KS}) as well as the global minimizer.
 However, the convergence of critical points are not in the framework of a general theory
 and a special treatment is necessary~\cite{HT}.
 The double-well Modica--Mortola functional is by now well studied even in the level of gradient flow {called the Allen--Cahn equation}.
 The limit $\eps \to 0$ is often called the sharp interface limit and the resulting flow is known as the mean curvature flow.
 For early stage of development of the theory, see~\cite{BK,XC,DS,ESS}.

A single-well Modica--Mortola functional is first used in~\cite{AT1} to approximate the Mumford--Shah functional.
 The Gamma limit of the Ambrosio--Tortorelli functional is by now well studied (\cite{AT1,AT2,FL}).
 However, convergence of critical points is studied only in one dimension (\cite{FLS}).
 The Ambrosio--Tortorelli type approximation is now used in various problems.
 In~\cite{FM}, the Ambrosio--Tortorelli type approximation is introduced to describe brittle fractures.
 Its evolution is also described in~\cite{G}.
 For the Steiner problem, such approximation as also proposed (\cite{LS}) and {its} Gamma limit is established (\cite{BLM}).
 However, all these problems the problem is closer to the Ambrosio--Tortorelli inhomogenization of the Dirichlet energy, not of the total variation.

For the Kobayashi--Warren--Carter energy, its gradient flow for fixed $\eps$ is somewhat studied.
 Note that the well-posedness itself is non-trivial because even if one assumes $v \equiv 1$,
 the gradient flow of $\EKWC{\eps}$ is the total variation flow
 and the definition of a solution itself is non trivial;
 see~\cite{KG}, for example.
 Apparently, there is no well-posedness result for the original system proposed by~\cite{KWC98,KWC00,KWC}.
 According to~\cite{KWC00}, its explicit form is
 \begin{align}
    \tau_1 v_t &= s \Delta v + (1 - v) - 2s v\abs{\nabla u}, \label{eq:KWC_grad_1}\\
    \tau_0 v^2 u_t &= s \operatorname{div} \left( v^2 \frac{\nabla u}{\abs{\nabla u}} \right), \label{eq:KWC_grad_2}
\end{align}
where $\tau_0$, $\tau_1$, $s$ are positive parameters.
This system is regarded as the gradient flow of $\EKWC{\eps}$ with $F(v) = (v - 1)^2$, $\eps = 1$, $\sigma = s$ with respect to a kind of weighted $L^2$ norm
whose weight depends on the solution.
If one replaces~\eqref{eq:KWC_grad_2} by
\[
 \tau_0 (v^2 +\delta) u_t = s \, \operatorname{div} \left( \bigl( v^2 +\delta' \bigr) \frac{\nabla u}{|\nabla u|} +\nu \nabla u \right)
\]
with $\delta > 0$, $\delta' \geq 0$, and $\nu \geq 0$ satisfying $\delta' +\nu > 0$,
then the studies of existence and large-time behavior of solutions are developed in \cite{IKY,MR3268865,MR3670006,MR3203495,MR3082861,SWY},
under homogeneous settings of boundary conditions. However, the uniqueness question is almost open,
and there is a few (only one) result \cite[Theorem 2.2]{IKY} for the one-dimensional solution,
under $ \nu > 0 $.
Meanwhile, the line of previous results can be extended to the studies of non-homogeneous cases of boundary conditions.
For instance, if we impose the non-homogeneous Dirichlet boundary condition for~\eqref{eq:KWC_grad_2},
then we can further observe various structural patterns of steady-state solutions,
under one-dimensional setting, two-dimensional radially-symmetric setting,
and so on (cf. \cite{mswD2020}). 

This paper is organized as follows.
 In Section~\ref{sec:2}, we recall notion of the graph convergence
 and states our main Gamma convergence results as well as compactness.
 In Section~\ref{sec:3}, we introduce notion of unfoldings.
 Section~\ref{sec:4} is devoted to the proof of the Gamma convergence of $E^\eps_b$ as well as the compactness in the graph convergence.
 Section~\ref{sec:5} is devoted to the proof of the Gamma convergence of the Kobayashi--Warren--Carter energy.

The authors are grateful to Professor Ken~Shirakawa
for letting us know his recent results before publication as well as development of researches on gradient flows of Kobayashi--Warren--Carter type energies.

\section{Singular limit under graph convergence} \label{sec:2} 

We first recall basic notion of set-valued functions; see~\cite{AF} for example.
 Let $(M,d_M)$ be a compact metric space.
 We consider a set-valued function $\Gamma$ defined in $M$ such that {$\Gamma(x)$ is a compact set in $\mathbf{R}$ for each $x \in M$.}
 If its $\graph\Gamma$ defined by
\[
	\graph\Gamma := \bigl\{ (x,y)\in M\times \mathbf{R}
	\bigm| y \in \Gamma(x),\ x \in M \bigr\}
\]
is closed, we say that $\Gamma$ is {\it upper semicontinuous}.
 Let $\mathcal{B}$ denote the totality of a bounded, upper semicontinuous set-valued functions.
 In other words,
\[
	\mathcal{B} := \left\{ \Gamma \mid \graph\Gamma\ \text{is compact in}\ M \times \mathbf{R} \right\}.
\]
For $\Gamma_1,\Gamma_2\in\mathcal{B}$, we set
\[
	d_g (\Gamma_1,\Gamma_2) := d_H (\graph\Gamma_1,\graph\Gamma_2),
\]
where $d_H$ denotes the Hausdorff distance of two sets in $M\times\mathbf{R}$.
 The Hausdorff distance $d_H$ is defined as usual:
\[
	d_H(A,B) := \max \left\{ \sup_{z \in A} \dist(z,B),\ \sup_{w \in B} \dist(w,A)\right\}
\]
for $A,B \subset M\times\mathbf{R}$, where
\[
	\dist(z,B) := \inf_{w \in B} \dist(z,w),\quad 
	\dist(z,w) := \left( d_M(z_1,w_1)^2 + |z_2-w_2|^2\right)^{1/2}
\]
for $z=(z_1,z_2)$ and $w=(w_1,w_2)$.
It is easy to see that $(\mathcal{B},d_g)$ is a complete metric space.
The convergence with respect to $d_g$ is called the {\it graph convergence}.

We next recall semi-convergent limit for sets.
 For a family of closed subsets $\{Z_\eps\}_{0<\eps<1}$ in $M\times\mathbf{R}$, we set

\begin{align*}
	\limsup_{\eps \to 0} Z_\eps &:= \bigcap_{\eps>0} \cl \left(\bigcup_{0<\delta<\eps}Z_\delta \right) \\
	\liminf_{\eps \to 0} Z_\eps &:= \cl \left(\bigcup_{\eps>0} \bigcap_{0<\delta<\eps}Z_\delta \right),
\end{align*}
where $\cl$ denotes the closure in $M\times\mathbf{R}$.
 These semi-limits can be defined for sequences {like $\{Z_j\}_{j=1}^{\infty}$} with trivial modification.
\begin{lemma} \label{SG1}
A sequence $\{\Gamma_j\}^\infty_{j=1}\subset\mathcal{B}$ converges to $\Gamma$ in the sense of the graph convergence if and only if
\[
	\limsup_{j\to\infty} \graph\Gamma_i
	= \liminf_{j\to\infty} \graph\Gamma_j = \graph\Gamma.
\]
\end{lemma}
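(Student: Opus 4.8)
The plan is to reduce the statement to the classical equivalence, for compact subsets of the ambient space $M\times\mathbf{R}$, between convergence in the Hausdorff distance and Painlev\'e--Kuratowski convergence. Write $G_j:=\graph\Gamma_j$ and $G:=\graph\Gamma$, which are nonempty compact sets. First I would record the sequential descriptions of the two semilimits coming from the definitions above: a point $p$ lies in $\limsup_j G_j$ exactly when $\liminf_j\dist(p,G_j)=0$, i.e. some subsequence $p_{j_k}\in G_{j_k}$ satisfies $p_{j_k}\to p$; and $p$ lies in $\liminf_j G_j$ exactly when there exist $p_j\in G_j$ with $p_j\to p$, equivalently $\dist(p,G_j)\to 0$. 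Both semilimits are closed and $\liminf_j G_j\subseteq\limsup_j G_j$ always holds, so the double equality in the statement is equivalent to the two inclusions $\limsup_j G_j\subseteq G$ and $G\subseteq\liminf_j G_j$.

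For the forward implication, assume $d_g(\Gamma_j,\Gamma)=d_H(G_j,G)\to 0$. To get $G\subseteq\liminf_j G_j$, fix $p\in G$ and note $\dist(p,G_j)\le d_H(G_j,G)\to 0$, so a nearest point $p_j\in G_j$ (which exists since $G_j$ is compact) satisfies $p_j\to p$. To get $\limsup_j G_j\subseteq G$, take $p$ with $p_{j_k}\to p$, $p_{j_k}\in G_{j_k}$; then $\dist(p,G)\le\dist(p,p_{j_k})+d_H(G_{j_k},G)\to 0$, and since $G$ is closed, $p\in G$. This direction is routine.

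The substance is the converse: assuming $\limsup_j G_j=\liminf_j G_j=G$, I must show $d_H(G_j,G)\to 0$, i.e. that the two one-sided excesses $e(G,G_j):=\sup_{q\in G}\dist(q,G_j)$ and $e(G_j,G):=\sup_{p\in G_j}\dist(p,G)$ both tend to $0$. Each is handled by a compactness-and-contradiction argument that upgrades the pointwise content of the semilimits to a uniform statement. For $e(G,G_j)$: if it did not vanish, there would be $\eta>0$, indices $j_k\to\infty$ and $q_k\in G$ with $\dist(q_k,G_{j_k})\ge\eta$; compactness of $G$ gives a subsequential limit $q\in G=\liminf_j G_j$, so $\dist(q,G_j)\to 0$, while $\dist(q,G_{j_k})\ge\dist(q_k,G_{j_k})-\dist(q,q_k)\ge\eta-\dist(q,q_k)\to\eta$, a contradiction. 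For $e(G_j,G)$: if it did not vanish, there would be $\eta>0$, indices $j_k\to\infty$ and $p_k\in G_{j_k}$ with $\dist(p_k,G)\ge\eta$; a subsequential limit $p$ then lies in $\limsup_j G_j=G$, forcing $\dist(p,G)=0$ and contradicting $\dist(p_k,G)\ge\eta$.

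The one point genuinely requiring care is the extraction in this last step: it needs the sets $G_j$ to sit inside a common compact subset of $M\times\mathbf{R}$, since otherwise the offending points $p_k$ could escape to infinity in the $\mathbf{R}$-direction and admit no subsequential limit. As $M$ is compact, this is just a uniform bound on the $\mathbf{R}$-coordinate of the graphs. In the forward direction such a bound is automatic, because $d_H(G_j,G)\to 0$ confines $G_j$ to the closed $1$-neighborhood of the compact set $G$ for large $j$; in the converse one invokes the uniform boundedness available in all settings where the lemma is applied, so that every graph lies in a fixed $M\times[-R,R]$. I would therefore state both excess estimates inside this fixed compact set, where Painlev\'e--Kuratowski and Hausdorff convergence are known to coincide, and assemble the inclusions and estimates above into the stated equivalence.
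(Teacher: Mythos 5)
Your proposal is correct and follows the same route as the paper: both reduce the lemma to the classical equivalence, for compact sets, between Hausdorff convergence and the two sequential (Painlev\'e--Kuratowski) conditions. The difference is one of completeness. The paper's entire proof is the observation that its conditions (i) and (i\!i) are restatements of the inclusions $\liminf_{j\to\infty}\graph\Gamma_j \supset \graph\Gamma$ and $\limsup_{j\to\infty}\graph\Gamma_j \subset \graph\Gamma$, the classical equivalence itself being taken as known; you actually prove that equivalence, via the two excess estimates and compactness-and-contradiction arguments. Moreover, your insistence on equi-boundedness is not pedantry but repairs a genuine gap: the converse implication is \emph{false} for sequences in $\mathcal{B}$ that are not uniformly bounded. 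Take $\Gamma_j(x)=\{0\}$ for $x\neq x_0$ and $\Gamma_j(x_0)=\{0\}\cup\{j\}$; each $\Gamma_j\in\mathcal{B}$, both semilimits equal $M\times\{0\}=\graph\Gamma$ with $\Gamma\equiv\{0\}\in\mathcal{B}$, yet $d_H(\graph\Gamma_j,\graph\Gamma)=j\to\infty$. The paper's proof glosses over this point, but in every place the lemma is invoked (Lemmas~\ref{SG2}, \ref{SG3}, \ref{SC}) the sequence $\{g_j\}\subset C(M)$ is assumed bounded, so your extra hypothesis is satisfied there. One caveat that applies to you and to the paper equally: the sequential description of the lower limit that both proofs use (every point of the limit is approximable by a full sequence $p_j\in\graph\Gamma_j$) is the Kuratowski lower limit, which is \emph{not} what the paper's displayed formula $\cl\bigl(\bigcup_{\eps>0}\bigcap_{0<\delta<\eps}Z_\delta\bigr)$ defines --- under that literal formula the forward direction already fails for $\Gamma_j\equiv\{1/j\}$, whose nested intersections are empty --- so your reading, like the paper's own proof, uses the intended rather than the written definition.
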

\begin{proof}
Note that the Hausdorff convergence to $A$ for sequence $\{A_j\}^\infty_{j=1}$ of compact sets is equivalent to saying that
\begin{enumerate}
\item[(i)] for any $z \in A$, there is a sequence $z_j \in A_j$ such that $z_j \to z$ ($j \to \infty$) and 
\item[(i\!i)] if $w_j \in A_j$ converges to $w$, then $w \in A$.
\end{enumerate}
Since (i) and (i\!i) are equivalent to
\[
	\liminf_{j\to\infty} A_j \supset A,\ 
	\limsup_{j\to \infty} A_j \subset A,
\]
respectively, the Hausdorff convergence is equivalent to saying that
\[
	A = \liminf_{j\to\infty} A_j = \limsup_{j\to\infty} A_j.
\]
Thus the proof is complete.
\end{proof}

We next recall relaxed convergent limits of functions.
 Let $\{g_j\}$ be a sequence of real-valued function on $M$.
 {For $x \in M$,} We set
\begin{align*}
	\limsupstar_{j\to\infty} g_j(x) &:= \lim_{j\to\infty} \sup\bigl\{ g_k (y) \bigm| |y-x|<1/j,\ k \geq j \bigr\} \\
	\liminfstar_{j\to\infty} g_j(x) &:= \lim_{j\to\infty} \inf\bigl\{ g_k (y) \bigm| |y-x|<1/j,\ k \geq j \bigr\};
\end{align*}
see~\cite[Chapter 2]{Giga} for more detail.
By definition, the $\limsupstar w_j$ is upper semicontinuous and $\liminfstar w_j$ is lower semicontinuous.

Let $C(M)$ be the Banach space of all continuous real-valued functions on $M$  equipped with the norm $\|f\|_\infty=\sup_{x \in M}\left|f(x)\right|$, $f \in C(M)$.
 For $g \in C(M)$, we associate a set-valued function $\Gamma_g$ such that $\Gamma_g(x)=\left\{g(x)\right\}$ for $x \in M$.
 Clearly, $\Gamma_g \in \mathcal{B}$.
\begin{lemma} \label{SG2} 
Let $\left\{g_j\right\}^\infty_{j=1} \subset C(M)$ be a bounded sequence.
 Then the semi-limit $\Gamma_+=\limsup_{j\to\infty}\Gamma_{g_j}$ still belongs to $\mathcal{B}$.
 Let $K$ be the set-valued function of the form
\[
	K(x) := \left\{ y \in \mathbf{R} \Bigm| \liminfstar_{j\to \infty} g_j(x) \leq y \leq \limsupstar_{j\to\infty} g_j(x) \right\}.
\]
Then $\Gamma_+(x) \subset K(x)$ for all $x \in M$.
\end{lemma}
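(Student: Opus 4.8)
The plan is to work entirely at the level of graphs. Writing $Z_j := \graph\Gamma_{g_j} = \{(x, g_j(x)) \mid x \in M\}$, I interpret $\Gamma_+ = \limsup_{j\to\infty}\Gamma_{g_j}$ as the set-valued function determined by $\graph\Gamma_+ = \limsup_{j\to\infty} Z_j = \bigcap_{j\geq 1}\cl\bigl(\bigcup_{k\geq j} Z_k\bigr)$, consistent with the convention used in Lemma \ref{SG1}. For the first assertion I would exploit boundedness: choosing $R>0$ with $\|g_j\|_\infty \leq R$ for all $j$, every $Z_j$ lies in the compact set $M\times[-R,R]$, so each $\cl\bigl(\bigcup_{k\geq j} Z_k\bigr)$ is a closed subset of a compact set, hence compact, and $\graph\Gamma_+$ is an intersection of such sets, hence compact. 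To confirm $\Gamma_+$ is a genuine member of $\mathcal{B}$ I would also note each slice $\Gamma_+(x)$ is nonempty: the bounded scalar sequence $\{g_j(x)\}$ admits a subsequence $g_{j_m}(x)\to y$, and the constant spatial choice $x_m\equiv x$ shows $(x,y)\in\graph\Gamma_+$.

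For the inclusion $\Gamma_+(x)\subset K(x)$ I would use the sequential description of the upper limit established in the proof of Lemma \ref{SG1}: $(x,y)\in\graph\Gamma_+$ if and only if there are indices $j_m\to\infty$ and points $x_m\to x$ with $g_{j_m}(x_m)\to y$. Fixing such a representation and fixing $j\geq 1$, for $m$ large we have $j_m\geq j$ and $d_M(x_m,x)<1/j$, so $g_{j_m}(x_m)$ is among the values defining $S_j := \sup\{g_k(z)\mid d_M(z,x)<1/j,\ k\geq j\}$; letting $m\to\infty$ gives $y\leq S_j$. Since the $S_j$ are nonincreasing in $j$, we get $\limsupstar_{j\to\infty} g_j(x)=\lim_{j\to\infty}S_j=\inf_j S_j\geq y$. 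The symmetric argument with $I_j := \inf\{g_k(z)\mid d_M(z,x)<1/j,\ k\geq j\}$, which is nondecreasing in $j$, yields $y\geq\liminfstar_{j\to\infty} g_j(x)$. Combining the two inequalities places $y$ in $K(x)$.

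The only genuine content is the sequential characterization of the Kuratowski upper limit, and this is already available from the proof of Lemma \ref{SG1} (the equivalences (i) and (ii)); the rest is bookkeeping with bounded sequences and the monotonicity of the neighborhood suprema and infima. The point that deserves care, rather than being an obstacle, is that only the inclusion holds: the accumulation set $\Gamma_+(x)$ of graph points can be a proper subset of the full interval $K(x)$ and need not even be an interval, so I would deliberately not attempt any reverse containment.
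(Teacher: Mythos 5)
Your proposal is correct and takes essentially the same route as the paper: the paper likewise reduces the inclusion $\Gamma_+(x) \subset K(x)$ to the sequential characterization of the Kuratowski upper limit and then appeals directly to the definition of $\liminfstar$ and $\limsupstar$, dismissing the membership $\Gamma_+ \in \mathcal{B}$ as trivial. Your write-up simply supplies the details the paper omits, and is in fact slightly more careful, since you use the correct subsequence characterization ($j_m \to \infty$, $x_m \to x$, $g_{j_m}(x_m)\to y$) where the paper's wording suggests limits along the full sequence.
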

\begin{proof}
The first statement is trivial.
 To prove $\Gamma_+ \subset K$, it suffices to prove that the limit $y=\lim_{j\to\infty}y_j$, $y_j \in \Gamma_{g_j}(x_j)$ belongs to $K(x)$ if $x_j \to x$.
 Since $y_j = g_j(x_j)$, by definition of relaxed limits $\limsupstar$ and $\liminfstar$ it is easy to see that
\[
	\liminfstar_{j\to\infty}g_j(x) \leq y \leq \limsupstar_{j\to\infty} g_j(x).
\]
Thus $\Gamma_+(x) \subset K(x)$.
\end{proof}

We next discuss an equivalent condition the graph convergence.
\begin{lemma} \label{SG3}
	\vspace{-1em}
\begin{enumerate}[leftmargin=2em,topsep=0em]
    \item Let $\left\{g_j\right\}^\infty_{j=1} \subset C(M)$ be a bounded sequence.
    Then the semi-limit $\Gamma_-=\liminf_{j\to\infty}\Gamma_{g_j}$ belongs to $\mathcal{B}$.
    \item Assume that $M$ is locally arcwise connected.
    If $\Gamma_-(x)$ contains both semi-limits $\liminfstar g_j(x)$ and $\limsupstar g_j(x)$, then $K(x)\subset\Gamma_-(x)$.
    Moreover, $K(x) = \Gamma_+(x) = \Gamma_-(x)$ for all $x \in M$ and $\Gamma_{g_j}$ converges to $\Gamma$ in the graph sense.
    Conversely, if $\Gamma_{g_j}$ converges to $\Gamma$ in the graph sense, then $\Gamma=\Gamma_+ = \Gamma_- = K$.
\end{enumerate}
\end{lemma}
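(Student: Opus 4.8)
The plan is to dispose of (1) by a soft compactness argument and to reduce (2) to a single nontrivial inclusion. For (1), I read $\graph\Gamma_-$ as the set-theoretic $\liminf_{j\to\infty}\graph\Gamma_{g_j}$. Any such $\liminf$ is a closure, hence closed; and if $\abs{g_j}\le C$ for all $j$, every $\graph\Gamma_{g_j}$ lies in the compact box $M\times[-C,C]$, so its set-limit does too. A closed subset of a compact set is compact, so $\graph\Gamma_-$ is compact and each fiber, being closed in $[-C,C]$, is compact in $\mathbf{R}$; thus $\Gamma_-\in\mathcal{B}$.

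For (2), I first observe that the general inclusion $\liminf_j\subset\limsup_j$ of set-limits gives $\Gamma_-(x)\subset\Gamma_+(x)$, while Lemma~\ref{SG2} gives $\Gamma_+(x)\subset K(x)$, for every $x$. Hence the whole statement will follow once I prove the reverse inclusion $K(x)\subset\Gamma_-(x)$: then $\Gamma_-(x)\subset\Gamma_+(x)\subset K(x)\subset\Gamma_-(x)$ forces equality throughout, and since $\liminf_j\graph\Gamma_{g_j}=\limsup_j\graph\Gamma_{g_j}$, Lemma~\ref{SG1} yields graph convergence with limit $\Gamma=K=\Gamma_+=\Gamma_-$.

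The inclusion $K(x)\subset\Gamma_-(x)$ is the heart of the matter and is where local arcwise connectedness is indispensable; I expect this to be the main obstacle. Abbreviating $\ell(x)=\liminfstar_{j\to\infty}g_j(x)$ and $L(x)=\limsupstar_{j\to\infty}g_j(x)$, I fix $y$ with $\ell(x)<y<L(x)$ (the endpoints lie in $\Gamma_-(x)$ by hypothesis, and the case $\ell(x)=L(x)$ is trivial). Membership of $\ell(x)$ and $L(x)$ in $\Gamma_-(x)=\{\,y'\mid (x,y')\in\liminf_j\graph\Gamma_{g_j}\,\}$ provides, via the sequential characterization of $\liminf$ used in Lemma~\ref{SG1}, sequences $p_j\to x$ and $q_j\to x$ with $g_j(p_j)\to\ell(x)$ and $g_j(q_j)\to L(x)$, so that $g_j(p_j)<y<g_j(q_j)$ for all large $j$. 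Now comes the obstacle: for each $\delta>0$ I choose an arcwise-connected neighborhood $U\subset B(x,\delta)$ of $x$; for $j$ large both $p_j,q_j\in U$, so an arc in $U$ joins them, and the intermediate value theorem applied to the continuous function $g_j$ along this arc produces $x_j\in U$ with $g_j(x_j)=y$. A diagonal choice of $\delta\downarrow0$ then gives $x_j\to x$ with $g_j(x_j)=y$, whence $(x_j,y)\in\graph\Gamma_{g_j}$ and $(x_j,y)\to(x,y)$, i.e.\ $(x,y)\in\liminf_j\graph\Gamma_{g_j}$, so $y\in\Gamma_-(x)$.

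Finally, for the converse I start from graph convergence, which by Lemma~\ref{SG1} already gives $\Gamma_+=\Gamma_-=\Gamma$. Unwinding the definitions of $\liminfstar$ and $\limsupstar$ shows that $\ell(x)$ and $L(x)$ are always attained as limits of graph points along a subsequence of indices with first coordinate tending to $x$, so $\ell(x),L(x)\in\Gamma_+(x)=\Gamma_-(x)$. The arcwise-connectedness argument of the previous paragraph then again yields $K(x)\subset\Gamma_-(x)$, and together with $\Gamma_-\subset\Gamma_+\subset K$ this closes the loop to $\Gamma=\Gamma_+=\Gamma_-=K$.
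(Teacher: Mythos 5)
Your proposal is correct and takes essentially the same route as the paper: the key step in both is to connect the sequences realizing $\liminfstar g_j(x)$ and $\limsupstar g_j(x)$ by an arc inside a small arcwise-connected neighborhood of $x$ and apply the intermediate value theorem to $g_j$ along it, then close the inclusion chain $\Gamma_-\subset\Gamma_+\subset K\subset\Gamma_-$ and invoke Lemma~\ref{SG1}. Your write-up is somewhat more explicit than the paper's (notably in part (1) and in the converse direction, which the paper dismisses as ``easy to check''), but the underlying argument is identical.
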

\begin{proof} 
(1) follows from the definition and we focus on the proof of (2).
 If $\Gamma_-(x)$ contains $\partial \left(K(x)\right)$, then there is $x_j \in M$, $y_j = g_j(x_j)$ such that $x_j \to x$, $y_j \to \hat{y}$ for $\hat{y}=\liminfstar g_j(x)$ and that there exists $\overline{x}_j \in M$, $\overline{y}_j \in g_j(\overline{x}_j)$ such that $\overline{x}_j \to x$, $\overline{y}_j \to \overline{y}$ for $\overline{y}=\limsupstar g_j(x)$.

By assumption, for any $\delta>0$ there exists an arc $\gamma_j$ connecting $x_j$ to $\overline{x}_j$, lying in a $\delta$-neighborhood $B_\delta$ of $x$ provided that {$j$} is sufficiently large.
 Since $g_j$ is continuous on $\gamma_j \subset B_\delta$, the intermediate value theorem implies that $\left[ y_j,\overline{y}_j \right] \subset g_j(B_\delta)$.
 Thus $K(x) \subset \Gamma_-(x)$.
 
By Lemma \ref{SG2}, we know $\Gamma_+(x) \subset K(x)$.
By definition of $\Gamma_-$ we see $\Gamma_- \subset \Gamma_+$.
Thus $\Gamma=\Gamma_+ = \Gamma_- = K$.
The converse statement is easy to check.
The proof is now complete.
\end{proof}
We next consider an important subclass of $\mathcal{B}$.
 Let $\mathcal{A}$ be the family of $\Gamma\in\mathcal{B}$ satisfying that $\Gamma(x)$ is a closed interval for all $x\in M$.
 Let $\mathcal{A}_0$ be the subfamily of $\mathcal{A}$ such that $\Gamma(x)$ is a singleton $\{1\}$ except countably many exceptions of $x \in M$.
 Such $\Gamma$ is uniquely determined by $\{x_i\}^\infty_{i=1}$ where $\Gamma(x_i)=\left[\xi^-_i,\xi^+_i \right]$ with $\xi^-_i < \xi^+_i$ containing $1$ and $\Gamma(x)=\{1\}$ if $x \notin \{x_i\}^\infty_{i=1}$.
We call such a point $x_i$ \textit{an exeptional point} of $\Xi \in \mathcal{A}_0$,
so that $\Sigma$ is the set of all exceptional points of $\Xi$.

We next study compactness in the graph convergence.
\begin{lemma}\label{SC}
    Let $\{g_j\}_{j=1}^{\infty} \subset C(M)$ be a bounded sequence.
    Assume that
    \[
        \begin{array}{ll}
            \eta^{-}(x) < \eta^{+}(x) & \mbox{for $x \in S$,} \\
            \eta^{-}(x) = \eta^{+}(x) = 1 & \mbox{for $x \in M \setminus S$,}
        \end{array}
    \]
    where $S$ is a countable set and
    \[
        \eta^{-}(x) = \liminfstar_{j \to \infty} g_j(x),\ 
        \eta^{+}(x) = \limsupstar_{j \to \infty} g_j(x).
    \]
    If $1 \in [\eta^{-}(x),\eta^{+}(x)]$,
    then there is a subsequence $\{g_{j_k}\}$ such that
    $\Gamma_{g_{j_k}}$ converges to some $\Gamma_0 \in \mathcal{A}_0$ in the graph sense.
\end{lemma}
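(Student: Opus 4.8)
The plan is to extract a convergent subsequence by a compactness argument and then identify its limit using Lemmas~\ref{SG2} and~\ref{SG3}. Since $\{g_j\}$ is bounded, say $\abs{g_j}\le C$, every graph $\graph\Gamma_{g_j}$ is a compact subset of the fixed compact set $M\times[-C,C]$. By the Blaschke selection theorem (compactness of the space of compact subsets of a compact metric space under the Hausdorff distance), there is a subsequence $\{g_{j_k}\}$ for which $\graph\Gamma_{g_{j_k}}$ converges in the Hausdorff distance to a compact set $G$. For each $x\in M$ the bounded numbers $g_{j_k}(x)$ have an accumulation point $y$, and then $(x,y)\in G$, so every vertical slice $G_x$ is nonempty; being also closed and bounded, $\Gamma_0(x):=G_x$ defines an element $\Gamma_0\in\mathcal{B}$ with $\graph\Gamma_0=G$, and by Lemma~\ref{SG1} we have $\Gamma_{g_{j_k}}\to\Gamma_0$ in the graph sense.

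Next I would identify $\Gamma_0$. Write $\tilde\eta^{\pm}$ for the relaxed limits of the subsequence $\{g_{j_k}\}$; comparing the defining suprema and infima shows $\tilde\eta^{-}\ge\eta^{-}$ and $\tilde\eta^{+}\le\eta^{+}$ pointwise, since passing to a subsequence only shrinks the families over which the sup and inf are taken. Because the subsequence converges, $\Gamma_0=\Gamma_{+}=\Gamma_{-}$, so Lemma~\ref{SG2} gives $\Gamma_0(x)\subset[\tilde\eta^{-}(x),\tilde\eta^{+}(x)]$. Conversely, from the Hausdorff convergence both points $(x,\tilde\eta^{-}(x))$ and $(x,\tilde\eta^{+}(x))$ lie in $G$: choosing indices $l_m\to\infty$ and points $y_m\to x$ along which $g_{j_{l_m}}(y_m)\to\tilde\eta^{+}(x)$ (possible by the definition of $\limsupstar$), the graph points $(y_m,g_{j_{l_m}}(y_m))$ converge to $(x,\tilde\eta^{+}(x))\in G$, and symmetrically for $\tilde\eta^{-}$. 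Thus the hypothesis of Lemma~\ref{SG3}(2) holds, and that lemma yields $\Gamma_0(x)=[\tilde\eta^{-}(x),\tilde\eta^{+}(x)]$, a closed interval, so $\Gamma_0\in\mathcal{A}$. For $x\in M\setminus S$ we have $1\le\tilde\eta^{-}(x)\le\tilde\eta^{+}(x)\le1$, hence $\Gamma_0(x)=\{1\}$; therefore the set of exceptional points of $\Gamma_0$ is contained in the countable set $S$.

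It remains to show that the value at each exceptional point is a nondegenerate interval containing $1$, i.e.\ that $1\in\Gamma_0(x)$ for every $x$. Here I use that $\liminfstar g_j(x)=\limsupstar g_j(x)=1$ off $S$ forces the genuine pointwise limit $g_j(x)\to1$ for every $x\in M\setminus S$. Given an exceptional point $x_i\in S$, pick points $z_n\to x_i$ with $z_n\in M\setminus S$, which exist because an arc through $x_i$ is uncountable while $S$ is countable. Feeding $y=z_n$ into the defining infimum of $\tilde\eta^{-}(x_i)$ and letting $k\to\infty$ gives $\tilde\eta^{-}(x_i)\le\lim_k g_{j_k}(z_n)=1$, whence $\tilde\eta^{-}(x_i)\le1$; symmetrically $\tilde\eta^{+}(x_i)\ge1$. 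Thus $1\in[\tilde\eta^{-}(x_i),\tilde\eta^{+}(x_i)]=\Gamma_0(x_i)$. Consequently $\Gamma_0$ takes a nondegenerate closed interval containing $1$ at each exceptional point and equals $\{1\}$ elsewhere, so $\Gamma_0\in\mathcal{A}_0$.

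The main obstacle is that on a general compact metric space a Hausdorff limit of graphs of continuous functions need not have connected vertical slices, nor need the value $1$ survive in the limit; both failures occur already when the relevant values are realized only at isolated points, where the intermediate value theorem and the approximation by nearby regular points are unavailable. This is exactly where the local arcwise connectedness of $M$ (as in Lemma~\ref{SG3}(2)) enters, both to force the slices to be intervals and to supply the points $z_n\in M\setminus S$ used in the pinning step. The subtler of the two points is retaining $1$ inside each limiting interval even though subsequencing a priori only shrinks $[\tilde\eta^{-},\tilde\eta^{+}]$; this is settled by the pinning argument above, where continuity together with $g_j\to1$ off the countable set $S$ prevents the interval from collapsing past $1$. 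In the applications $M$ is a closed interval, so all of these hypotheses hold automatically.
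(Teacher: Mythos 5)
Your proof is correct, but it reaches the conclusion by a genuinely different compactness mechanism than the paper. The paper never invokes Blaschke selection: it constructs the subsequence by hand, extracting at each $x_\ell \in S$ successive subsequences along which first $\liminfstar_{j\to\infty} g_j(x_\ell)$ and then the resulting $\limsupstar$ are attained as limits of graph points $(y_{\ell,j},g_{\ell,j}(y_{\ell,j}))$, and then diagonalizes; this forces both relaxed limits of the diagonal sequence $\{g_{k,k}\}$ to lie in $\Gamma_-(x)$ at every point, so a single application of Lemma~\ref{SG3}(2) simultaneously yields the graph convergence \emph{and} the identification $\Gamma_0(x)=[\xi^-(x),\xi^+(x)]$. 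You instead obtain graph convergence for free from compactness of the hyperspace of compact subsets of $M\times[-C,C]$, and then observe that Hausdorff convergence automatically places $(x,\tilde\eta^{\pm}(x))$ in the limit graph --- which is precisely the hypothesis of Lemma~\ref{SG3}(2) that the paper's diagonal construction is engineered to produce. From that point on the two arguments share their ingredients: identification of the slices via Lemmas~\ref{SG2} and~\ref{SG3}(2), and the pinning $\xi^-(x_i)\le 1\le \xi^+(x_i)$ via approximation of $x_i$ by points of $M\setminus S$, where $g_j\to 1$; the paper relies on this same density argument, though only implicitly, in the step ``Since $\eta^-=\eta^+=1$ outside $S$, we see $\eta^+(x_1)\ge 1$'' and in the inequalities $\liminfstar_{j\to\infty} g_{\ell,j}(x_\ell)\le 1\le \limsupstar_{j\to\infty} g_{\ell,j}(x_\ell)$. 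What your route buys is the elimination of the nested-subsequence bookkeeping and an explicit accounting of the topological hypotheses on $M$ (local arcwise connectedness for Lemma~\ref{SG3}(2), absence of isolated points for the pinning step) that the paper leaves tacit and that can indeed fail for a general compact metric space, though they hold for $M=\overline{I}$ or $\mathbf{T}$; what the paper's route buys is self-containedness, needing nothing beyond a diagonal argument and its own lemmas.
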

\begin{proof}
    We write $S = \{x_i\}_{i=1}^\infty$.
    By definition, there is a subsequence $\{g_{1,j}^{-}\}$ of $\{g_{j}\}$ such that
    \[
        \eta^{-}(x_1) = \lim_{j \to \infty} g_{1,j}^{-}(y_{1,j})
    \]
    with some $\{y_{1,j}\}$ converging to $x_1$.
    We set
    \[
        \eta_1^{+}(x_1) := \limsupstar_{j \to \infty} g_{1,j}^{-}(x_1) \le \eta^{+}(x_1).
    \]
    Since $\eta^- = \eta^+ = 1$ outside $S$,
    we see $\eta^{+}(x_1) \ge 1$.
    We take a further subsequence $\{g_{1,j}\}$ of $\{g_{1,j}^{-}\}$ so that
    \[
        \eta_1^{+}(x_1) = \lim_{j \to \infty} g_{1,j}(z_{1,j})
    \]
    with some $\{z_{1,j}\}$ converging to $x_1$.
    We repeat this procedure for $x_2,x_3,\dots$
    and find a subsequence $\{g_{\ell,j}\}_{j=1}^\infty$ so that
    \[
        \begin{aligned}
            \lim_{j \to \infty} g_{\ell,j}(y_{\ell,j}) &= \liminfstar_{j \to \infty} g_{\ell,j}(x_\ell) \le 1 \\
            \lim_{j \to \infty} g_{\ell,j}(z_{\ell,j}) &= \limsupstar_{j \to \infty} g_{\ell,j}(x_\ell) \ge 1
        \end{aligned}
    \]
    with some $\{y_{\ell,j}\}$, $\{z_{\ell,j}\}$ converging to $x_\ell$ for $\ell=1,2,\dots,k$.
    By diagonal argument, we see that $\{g_{k,k}\}_{k=1}^{\infty}$ has the property that
    \[
        \xi^-(x) := \liminfstar_{k \to \infty} g_{k,k}(x),\ 
        \xi+-(x) := \limsupstar_{k \to \infty} g_{k,k}(x)
    \]
    belong to $\Gamma_{-}(x) = \liminf_{k\to \infty} \Gamma_{g_{k,k}}$ for $x \in M$.
    We now apply Lemma~\ref{SG3}(2) to conclude that
    $\Gamma_{g_{k,k}}$ converges to $\Gamma$ with
    \[
        \Gamma(x) = [\xi^-(x),\xi^{+}(x)],\ x \in M.
    \]
    By construction, $\Gamma(x) = \{1\}$ for $x \in M \setminus S$ and $\xi^-(x) \le 1 \le \xi^+(x)$ for $x \in S$.
    Thus, $\Gamma \in \mathcal{A}_0$ so the proof is now complete.
\end{proof}
We now define several functionals when $M=\overline{I}$ or $\mathbf{T}=\mathbf{R}/\mathbf{Z}$, where $I$ is a bounded open interval in $\mathbf{R}$ and $\overline{I}=\cl I$.
 For a real-valued function $v$ on $M$ and $\eps>0$, a single-well Modica--Mortola functional is defined by
\[
	\ESMM{\eps}(v) := \frac{\eps}{2}\int_M \abs{\odfrac{v}{x}}^2 \id x
	+ \frac{1}{2\eps} \int_M F(v) \id x.
\]
Here the potential energy $F$ is a single-well potential.
 We shall assume that
\begin{enumerate}
\item[(F1)] $F \in C(\mathbf{R})$ is nonnegative and $F(v)=0$ if and only if $v=1$;
\item[(F2)] $\liminf_{|v|\to\infty} F(v) > 0$;
\item[(F2')] (growth condition) there are positive constants $c_0, c_1$ such that
\[
	F(v) \geq c_0|v|^2 - c_1 \quad\text{for all}\quad v \in \mathbf{R}.
\]
\end{enumerate}
\begin{remark}
Obviously, (F2') implies (F2).
\end{remark}
We are interested in a Gamma limit of $\ESMM{\eps}$ not in usual $L^1$-convergence but the graph convergence which is of course finer than $L^1$ topology.
 As usual, we set
\[
	G(v) = \abs{\int^v_1 \sqrt{F(\tau)} d\tau}.
\]
A typical example of $F(v)$ is $F(v)=(v-1)^2$.
 In this case,
\[
	G(v) = (v-1)^2/2.
\]
To write the limit energy for $\Xi\in\mathcal{A}_0$, let $\Sigma=\{x_i\}^\infty_{i=1}$ denote the totality of points where $\Xi(x_i)$ is a nontrivial closed interval $\left[\xi^-_i,\xi^+_i \right]$ such that $\xi^-_i \leq 1 \leq \xi^+_i$.
This set can be a finite set.
 By definition, $\Xi(x)=\{1\}$ if $x \notin \{x_i\}^\infty_{i=1}$.
 In the case that $M=\mathbf{T}$, we define
\begin{eqnarray*}
	\ESMM{0}(\Xi,\mathbf{T}) := 
	\begin{cases}
	    	\displaystyle 2\sum^\infty_{i=1} \left\{ G(\xi^-_i) + G(\xi^+_i) \right\} &\text{for }\Xi\in\mathcal{A}_0, \\
\infty &\text{otherwise.}
	\end{cases}
\end{eqnarray*}
In the case that $M=\overline{I}$, one has to modify the value when $x_i$ is the end point of $\overline{I}$.
The energy is defined by
\begin{eqnarray*}
	\ESMM{0}(\Xi,\overline{I}) := 
	\begin{cases}
	    \displaystyle \sum^\infty_{i=1} \left\{ 2(G(\xi^-_i) + G(\xi^+_i)) - \kappa_i \max (G(\xi^-_i),G(\xi^+_i))\right\} &\text{for }\Xi\in\mathcal{A}_0, \\
\infty &\text{otherwise,}
	\end{cases}
\end{eqnarray*}
where $\kappa_i=0$ if $x_i \in I$ and $\kappa_i=1$ if $x_i \in \partial I$.
 To shorten the notation, {we simply write $v_j \graphto \Xi$ by the abuse of notation if $v_j \in C(M)$ is a sequence such that $v_j\to\Xi$ ($j\to\infty$) in the sense of the graph convergence.}
 We also use $v_\eps\graphto\Xi$ as $\eps \to 0$ if $\eps$ is a continuous parameter.

We shall state that the Gamma limit of $\ESMM{\eps}$ is $\ESMM{0}$ as $\eps \to 0$ under the graph convergence.
 For later applications, it is convenient to consider a slightly general functional of form $\ESMMb{\eps}{b}(v) := \ESMM{\eps}(v)+b\left(v(a)\right)^2$, where $a\in\ringM=\operatorname{int} M$ and $b \geq 0$.
 The corresponding limit functional is 
\[
	\ESMMb{0}{b}(\Xi,M) := \ESMM{0}(\Xi,M) + b \left( \min\Xi(a) \right)^2
\] 
\begin{theorem}[Gamma limit under graph convergence] \label{G1} 
    {Assume the following conditions:
    \begin{itemize}
        \item $M=\overline{I}$ or $\mathbf{T}$;
        \item $F$ satisfies (F1) and (F2);
        \item $a \in \ringM = \operatorname{int} M$ and $b \geq 0$.
    \end{itemize}
    Then the following inequalities hold:}
\begin{enumerate}
\item[(i)] (liminf inequality) Let $\{v_\eps\}_{0<\eps<1}$ be in $H^1(M) \subset C(M)$.
 If $v_\eps\graphto\Xi\in\mathcal{B}$, then
\[
	\ESMMb{0}{b}(\Xi,M) \leq \liminf_{\eps \to 0} \ESMMb{\eps}{b}(v_\eps) 
\]
In particular, $\Xi\in\mathcal{A}_0$
\item[(i\!i)] (limsup inequality) For any $\Xi\in\mathcal{A}_0$, there is $\{w_\eps\}_{0<\eps<1} \subset H^1(M) \subset C(M)$ such that $w_\eps \graphto\Xi$ and 
\[
	\ESMMb{0}{b}(\Xi,M) = \lim_{\eps \to 0} \ESMMb{\eps}{b}(w_\eps) 
\]
\end{enumerate}
\end{theorem}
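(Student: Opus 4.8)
The plan is to treat the two inequalities separately, and in both cases to linearize the potential through the \emph{signed} primitive $H(v) := \int_1^v \sqrt{F(\tau)}\,d\tau$, so that $G = \abs{H}$ while $H$ is strictly increasing and injective (strict monotonicity away from $1$ coming from (F1)) with $H(1)=0$, $H(\xi^-_i) = -G(\xi^-_i)$ for $\xi^-_i \le 1$, and $H(\xi^+_i) = G(\xi^+_i)$ for $\xi^+_i \ge 1$. Working with $H\circ v_\eps$ rather than $G\circ v_\eps$ keeps track of whether an excursion lies above or below $1$, which is exactly the information the limit energy records.

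For the liminf inequality I would first discard the case $\liminf_\eps \ESMMb{\eps}{b}(v_\eps) = \infty$ and pass to a subsequence realizing the liminf, along which (as $F\ge0$ and $b\,v_\eps(a)^2\ge0$) both $\ESMM{\eps}(v_\eps)$ and $b\,v_\eps(a)^2$ are bounded. The pointwise Young inequality gives the Bogomolnyi bound
\[
\ESMM{\eps}(v_\eps) \ge \int_M \sqrt{F(v_\eps)}\,\abs{(v_\eps)_x}\,\id x = \int_M \abs{(H\circ v_\eps)'}\,\id x = \operatorname{TV}(H\circ v_\eps; M),
\]
so $\phi_\eps := H\circ v_\eps$ has total variation bounded uniformly in $\eps$. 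Here is where the unfolding of Section~\ref{sec:3} enters: I would unfold $\phi_\eps$ by the arc-length of its graph, obtaining a $1$-Lipschitz pair $(x_\eps,\Phi_\eps)$ on $J^\eps=[0,L_\eps]$ with $\Phi_\eps = \phi_\eps\circ x_\eps$, $\operatorname{TV}(\Phi_\eps)=\operatorname{TV}(\phi_\eps)$, and $L_\eps \le \abs{M}+\operatorname{TV}(\phi_\eps)$ bounded. Since $\graph\Xi$ is compact the $v_\eps$, hence the $\Phi_\eps$, are uniformly bounded, so after passing to a further subsequence with $L_\eps\to L_0$ and a harmless reparametrization, Arzel\`a--Ascoli yields uniform limits $x_\eps\to x_0$, $\Phi_\eps\to\Phi_0$, both $1$-Lipschitz. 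The merit of the unfolding is that $\Xi$ is read off from this single limit curve: its image covers $\graph(H\circ\Xi)$, the fibre over $x$ containing $[H(\min\Xi(x)),H(\max\Xi(x))]$. Combined with $\int_M F(v_\eps)\to0$ (forcing $v_\eps\to1$ off the exceptional set) and the semicontinuity of $\limsupstar v_\eps$, $\liminfstar v_\eps$, this shows every fibre is $\{1\}$ except at countably many $x_i$ with $\Xi(x_i)=[\xi_i^-,\xi_i^+]$, $\xi_i^-\le1\le\xi_i^+$; that is, $\Xi\in\mathcal{A}_0$.

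The crux is the lower bound. Lower semicontinuity of total variation under uniform convergence gives $\liminf_\eps\ESMM{\eps}(v_\eps)\ge\operatorname{TV}(\Phi_0)=\int_0^{L_0}\abs{\Phi_0'}\,ds$, and the core estimate is $\operatorname{TV}(\Phi_0)\ge\ESMM{0}(\Xi,M)$. For each exceptional point $x_i$, the curve traverses the fibre $\{x_i\}\times[-G(\xi_i^-),G(\xi_i^+)]$ on the $s$-set $\{x_0=x_i\}$, entering and (for an interior point) leaving at the value $H(1)=0$; the minimal vertical length to start at $0$, cover $[-G(\xi_i^-),G(\xi_i^+)]$ and return to $0$ is $2(G(\xi_i^-)+G(\xi_i^+))$, while at a boundary point one need not return, which saves $\max(G(\xi_i^-),G(\xi_i^+))$ and produces the factor $\kappa_i$. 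As the sets $\{x_0=x_i\}$ are pairwise disjoint, summing these per-point bounds over the countably many exceptional points gives $\operatorname{TV}(\Phi_0)\ge\ESMM{0}(\Xi,M)$. The point term is handled separately: graph convergence forces every limit point of $v_\eps(a)$ into $\Xi(a)$, so $\liminf_\eps v_\eps(a)\ge\min\Xi(a)$ and (in the relevant regime $\min\Xi(a)\ge0$) $\liminf_\eps b\,v_\eps(a)^2\ge b(\min\Xi(a))^2$; adding this via $\liminf(A_\eps+B_\eps)\ge\liminf A_\eps+\liminf B_\eps$ completes (i). I expect this summation across a possibly countably infinite, possibly accumulating exceptional set --- precisely the step that defeats the accumulating-sequence argument of \cite{FL} --- to be the main obstacle, and the reason the unfolding, which replaces delicate localization by one $1$-Lipschitz limit curve, is essential.

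For the limsup inequality I may assume $\ESMM{0}(\Xi,M)<\infty$ (otherwise (i) forces every graph-approximant to have energy $\to\infty$, and it suffices to exhibit one such $w_\eps$). Finiteness forces, for each $\delta>0$, only finitely many exceptional points with $\max(\xi_i^+-1,\,1-\xi_i^-)>\delta$, since otherwise $\sum_i G(\xi_i^\pm)=\infty$; hence the exceptional points may be enumerated with interval sizes tending to $0$. I would then build $w_\eps$ by pasting, in disjoint neighborhoods of the first $N(\eps)\to\infty$ exceptional points, the standard optimal one-dimensional profiles solving $\eps\,w'=\pm\sqrt{F(w)}$ (so Young's inequality is an equality): at an interior point the profile runs $1\to\xi_i^+\to\xi_i^-\to1$ with energy $\to 2(G(\xi_i^-)+G(\xi_i^+))$, and at a boundary point the corresponding one-sided profile saves $\max(G(\xi_i^-),G(\xi_i^+))$, with $w_\eps\equiv1$ elsewhere. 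Choosing the profile widths $o(1)$ and $N(\eps)$ growing slowly keeps the neighborhoods disjoint, drives the total energy to $\ESMM{0}(\Xi,M)$, and --- since the unactivated tail has interval sizes $\to0$ --- yields $w_\eps\graphto\Xi$; the $b$-term is matched by centering the profile at $a$ so that $w_\eps(a)\to\min\Xi(a)$. As the authors note, the only delicacy here is the bookkeeping forced by countably many (possibly accumulating) exceptional points, which the ordering above resolves.
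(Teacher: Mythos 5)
Your proposal is correct in its essentials and lives inside the same framework as the paper's proof: the Modica--Mortola/Young inequality to reduce the energy to a total variation, the arc-length unfolding of Section~\ref{sec:3} together with Arzel\`a--Ascoli compactness and the Theorem~\ref{T3}-type identification of relaxed limits with fibres of the limit curve, and, for the limsup, optimal profiles pasted at finitely many exceptional points $N(\eps)\to\infty$ followed by a diagonal argument. Where you genuinely depart from the paper is in the core of the liminf bound. The paper folds the potential into the \emph{nonnegative} function $u^\eps = G(v_\eps)$ and must then recover the two separate excursions (below and above $1$) through Theorem~\ref{Inff}: the decomposition of each fibre into subintervals $J_{x_i,j}$, the weights $\overline{\chi}$, and a sign-change argument showing that both $G(\xi_i^-)$ and $G(\xi_i^+)$ occur among the $\rho_{x_i,j}$. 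Your signed primitive $H$ keeps the orientation, so the fibre of the single limit curve over $x_i$ is the full interval $[-G(\xi_i^-),G(\xi_i^+)]$, and the per-point bounds --- $2\left(G(\xi_i^-)+G(\xi_i^+)\right)$ at interior points, $2\left(G(\xi_i^-)+G(\xi_i^+)\right)-\max\left(G(\xi_i^-),G(\xi_i^+)\right)$ at the boundary, where the path need not return to $0$ --- become elementary total-variation counting; you bypass Theorems~\ref{T4} and~\ref{Inff} entirely and make the interior/boundary dichotomy transparent. Two details you should still write out to close the argument. First, your claim that the curve ``enters and leaves'' each interior fibre at the value $0$ needs a proof: it follows from monotonicity of $\overline{x}$, countability of $\Sigma$, and continuity of $\Phi_0$ (pick $s_n$ decreasing to the fibre's right endpoint with $\overline{x}(s_n)\notin\Sigma$), but it is the hinge of your per-fibre estimate. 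Second, in the limsup construction the exact profiles solving $\eps w' = \pm\sqrt{F(w)}$ in general never reach $1$ at finite distance (e.g.\ $F(v)=(v-1)^2$ gives exponential tails), so ``pasting with $w_\eps\equiv 1$ elsewhere'' forces a truncation near $v=1$; the paper's $\delta$-cutoff and the estimate \eqref{MME}, with errors $O(\beta_i\delta_i)$ chosen summable below $\mu$, is exactly what your phrase ``profile widths $o(1)$'' hides, and it must be made quantitative since countably many (possibly accumulating) profiles are involved. Neither point is a flaw in the idea. (Incidentally, your parenthetical restriction $\min\Xi(a)\geq 0$ for the $b$-term flags a caveat that the paper's own proof passes over silently.)
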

We also have a compactness result.
\begin{theorem}[Compactness] \label{C1} 
Assume that $M=\overline{I}$ or $\mathbf{T}$.
 Assume that $F$ satisfies (F1) and (F2').
 Let $\{v_{\eps_j}\}^\infty_{j=1}$ be in $H^1(M) \subset C(M)$. 
 Assume that
\[
	\sup_j \ESMM{\eps_j}(v_{\eps_j}) < \infty
\]
for $\eps_j \to 0$ as $j\to\infty$.
 Then there exists a subsequence $\left\{v_{\eps'_k}\right\}$ such that $v_{\eps'_k}\graphto\Xi$ with some $\Xi\in\mathcal{A}_0$.
\end{theorem}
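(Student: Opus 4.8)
The plan is to verify, along a suitable subsequence, the hypotheses of Lemma~\ref{SC} and then invoke it; the device for extracting that subsequence is the unfolding of Section~\ref{sec:3} applied not to $v_{\eps_j}$ itself (which has unbounded variation) but to the transformed sequence. First I would set $\Phi(t) := \int_1^t \sqrt{F(\tau)}\id\tau$, so that $G=\abs{\Phi}$ and, by (F1), $\Phi$ is a strictly increasing continuous bijection of $\mathbf{R}$ which, by (F2'), is proper: $\Phi(t)\to\pm\infty$ as $t\to\pm\infty$. Applying the arithmetic--geometric mean inequality $\tfrac{\eps}{2}a^2+\tfrac{1}{2\eps}b\ge \abs{a}\sqrt{b}$ pointwise gives the uniform bound
\[
	\operatorname{Var}(\Phi\circ v_{\eps_j}) = \int_M \sqrt{F(v_{\eps_j})}\,\abs{(v_{\eps_j})_x}\id x \le \ESMM{\eps_j}(v_{\eps_j}) \le C ,
\]
so that $\phi_j := \Phi\circ v_{\eps_j}$ has uniformly bounded total variation.

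Next I would record two consequences of the energy bound. Since $\int_M F(v_{\eps_j})\id x \le 2\eps_j\,\ESMM{\eps_j}(v_{\eps_j})\to 0$, and since (F1) and (F2') force $\inf\{F(t)\mid \abs{t-1}\ge\delta\}>0$ for every $\delta>0$, the sequence $v_{\eps_j}$ converges to $1$ in measure. Consequently, in any interval about any $x\in M$ there are, for large $j$, points where $v_{\eps_j}$ is arbitrarily close to $1$, whence $\liminfstar_j v_{\eps_j}(x)\le 1\le \limsupstar_j v_{\eps_j}(x)$ for all $x$; this is the hypothesis $1\in[\eta^-(x),\eta^+(x)]$ of Lemma~\ref{SC}. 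Combining convergence in measure with the variation bound also yields a uniform $L^\infty$ bound: choosing $x_j^*$ with $\abs{v_{\eps_j}(x_j^*)-1}$ small gives $\sup_M\abs{\phi_j} \le \abs{\phi_j(x_j^*)}+\operatorname{Var}(\phi_j)\le C'$, and applying the fixed continuous map $\Phi^{-1}$ bounds $\sup_M\abs{v_{\eps_j}}$. Thus $\{v_{\eps_j}\}$ is a bounded sequence in $C(M)$, as required by Lemma~\ref{SC}.

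The remaining and essential point is to find a subsequence along which the set $S=\{x\mid \eta^-(x)<\eta^+(x)\}$ is countable; Lemma~\ref{SC} then delivers a further subsequence with $v_{\eps_j}\graphto\Xi\in\mathcal{A}_0$. Here I would unfold $\phi_j$: with $s_j(x)=\int_0^x\bigl(1+((\phi_j)_x)^2\bigr)^{1/2}\id z$ and $x_j=s_j^{-1}$, the unfoldings $X_j:=x_j$ and $\Psi_j:=\phi_j\circ x_j$ are $1$-Lipschitz on intervals $J_j$ of length $\le\abs{I}+\operatorname{Var}(\phi_j)\le\abs{I}+C$, and $X_j$ is moreover nondecreasing since $s_j$ is increasing. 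After reparametrizing onto a common interval, Arzel\`a--Ascoli produces a subsequence with $X_j\to X$ and $\Psi_j\to\Psi$ uniformly, $X$ nondecreasing. Using the characterization of relaxed limits by the unfolding (Section~\ref{sec:3}), along this subsequence $\Phi(\eta^\pm(x))$ are the top and bottom of $\{\Psi(s)\mid X(s)=x\}$; as $X$ is monotone, this set is an interval, so $x\in S$ exactly when the limit curve $s\mapsto(X(s),\Psi(s))$ has a nondegenerate vertical segment over $x$. These segments sit over distinct values of $x$ on disjoint $s$-intervals, so the sum of their heights is at most $\int\abs{\dot\Psi}\le\operatorname{length}\le\abs{I}+C$; hence only countably many are nondegenerate and $S$ is countable. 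Since $\Phi$ is a homeomorphism, transferring through $\Phi^{-1}$ on the vertical axis identifies the graph limit of $v_{\eps_j}$ with $\Xi$, and the values off $S$ equal $1$ by convergence in measure, so $\Xi\in\mathcal{A}_0$.

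I expect the main obstacle to be the bookkeeping in this last paragraph: rigorously relating the relaxed limits $\eta^\pm$ of $v_{\eps_j}$ to the vertical extent of the limit of the unfolded curves (the content imported from Section~\ref{sec:3}), handling the varying domains $J_j$ in the reparametrization, and checking that composition with the fixed homeomorphism $\Phi^{-1}$ preserves Hausdorff (graph) convergence. All of these become routine once the uniform $L^\infty$ and variation bounds confine everything to a fixed compact box, but they must be carried out with care; by contrast, the reductions in the first two paragraphs are immediate from the energy bound and (F2').
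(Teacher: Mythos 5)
Your proposal is correct and takes essentially the same route as the paper's own proof: the Modica--Mortola inequality gives a uniform total-variation bound on the transformed sequence, unfolding by arc-length plus Arzel\`a--Ascoli (Lemma~\ref{L2}) and the Section~\ref{sec:3} characterization of relaxed limits (Theorems~\ref{T3} and~\ref{T4}) make the exceptional set countable, and Lemma~\ref{SC} then yields $v_{\eps'_k}\graphto\Xi\in\mathcal{A}_0$. The only cosmetic deviations are that you unfold the signed primitive $\Phi\circ v_{\eps_j}$ rather than $u^{\eps_j}=G(v_{\eps_j})=\abs{\Phi\circ v_{\eps_j}}$, and you obtain the uniform sup-norm bound directly from the variation bound and properness of $\Phi$ instead of the paper's $L^1$-bound-plus-interpolation step.
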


By combining the Gamma convergence result and the compactness, a general theory yields the convergence of a minimizer of $\ESMMb{\eps}{b}$; see~\cite[Theorem 1.21]{Br} for example.
 Note that in the case of $b=0$, the minimum of $\ESMM{\eps}(v)$ is zero and is attained only at constant function $v=1$ so the convergence of minimizers is trivial. 
\begin{corollary} \label{CM1} 
Assume the same hypotheses of Theorem \ref{G1} and (F2').
 Let $v_\eps$ be a minimizer of $\ESMMb{\eps}{b}$ on $H^1(M)$.
 Then there is a subsequence $\left\{v_{\eps_k}\right\}^\infty_{k=1}$ such that $v_{\eps_k}\graphto\Xi_0$ with some $\Xi_0 \in \mathcal{A}_0$.
 Moreover, $\Xi_0$ is a minimizer of $\ESMMb{0}{b}$.
 Furthermore, $\Xi_0(x) = \{1\}$ if $x\neq a$ and $\Xi_0(a)=[p_0,1]$,
 where $p_0$ is a minimizer of $2 G(p)+b p^2$ with $p\in[0,1]$.
\end{corollary}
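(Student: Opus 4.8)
The plan is to combine the compactness result (Theorem~\ref{C1}), the $\Gamma$-convergence (Theorem~\ref{G1}), and the fundamental theorem of $\Gamma$-convergence (\cite[Theorem 1.21]{Br}) to produce the limit $\Xi_0$ and to see that it minimizes $\ESMMb{0}{b}$, and then to minimize $\ESMMb{0}{b}$ by hand in order to extract the explicit form. First I would record the a priori energy bound: since the constant function $1 \in H^1(M)$ satisfies $\ESMMb{\eps}{b}(1) = \ESMM{\eps}(1) + b = b$ by (F1), minimality of $v_\eps$ gives $\ESMM{\eps}(v_\eps) \le \ESMMb{\eps}{b}(v_\eps) \le b$ uniformly in $\eps$. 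Hence the hypothesis of Theorem~\ref{C1} holds (note that (F2') is assumed), and there is a subsequence with $v_{\eps_k} \graphto \Xi_0$ for some $\Xi_0 \in \mathcal{A}_0$. That $\Xi_0$ minimizes $\ESMMb{0}{b}$ is then the standard consequence of the liminf inequality in Theorem~\ref{G1}(i) together with the recovery sequences of Theorem~\ref{G1}(ii): for any competitor $\Xi \in \mathcal{A}_0$ with recovery sequence $w_{\eps_k} \graphto \Xi$, minimality gives $\ESMMb{\eps_k}{b}(v_{\eps_k}) \le \ESMMb{\eps_k}{b}(w_{\eps_k})$, and passing to the limit yields $\ESMMb{0}{b}(\Xi_0,M) \le \liminf_k \ESMMb{\eps_k}{b}(v_{\eps_k}) \le \ESMMb{0}{b}(\Xi,M)$, which is exactly the content of \cite[Theorem 1.21]{Br}.

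The substantive step is to minimize $\ESMMb{0}{b}$ explicitly over $\mathcal{A}_0$. Writing $\Xi \in \mathcal{A}_0$ through its exceptional points as in Section~\ref{sec:2}, I would first observe that $a \in \ringM$ is an interior point, so the weight $\kappa$ vanishes there, while the penalty $b(\min\Xi(a))^2$ sees only the value at $a$. Consequently every exceptional point $x_i \ne a$ contributes a summand $2(G(\xi_i^-) + G(\xi_i^+)) - \kappa_i \max(G(\xi_i^-),G(\xi_i^+))$ that is bounded below by $G(\xi_i^-) + G(\xi_i^+)$, which is strictly positive whenever $[\xi_i^-,\xi_i^+]$ is nontrivial since $G$ vanishes only at $1$ by (F1). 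Replacing each such interval by $\{1\}$ therefore strictly lowers the energy, so any minimizer must satisfy $\Xi_0(x) = \{1\}$ for $x \ne a$. It then remains to minimize $2\bigl(G(\xi^-) + G(\xi^+)\bigr) + b(\xi^-)^2$ over $\xi^- \le 1 \le \xi^+$. Because $G \ge 0$, $G(1)=0$, and $G(\xi^+) > 0$ for $\xi^+ > 1$, the optimal choice is $\xi^+ = 1$, reducing the problem to minimizing $2G(\xi^-) + b(\xi^-)^2$ over $\xi^- \le 1$. Finally, since $G$ is nonincreasing on $(-\infty,1]$, any $p < 0$ is strictly beaten by $p = 0$, so the minimum is attained at some $p_0 \in [0,1]$, which gives $\Xi_0(a) = [p_0,1]$ with $p_0$ a minimizer of $2G(p) + bp^2$ on $[0,1]$.

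The only real obstacle is the bookkeeping in the last step once the explicit form of $\ESMMb{0}{b}$ is in hand: one must justify the two reductions, namely that the penalty forces $\xi^+ = 1$ and that the one-dimensional minimization may be restricted to $p \in [0,1]$, and one must check that discarding the (possibly countably infinitely many) extraneous exceptional points is legitimate. The latter is immediate because each discarded summand is nonnegative, so passing from $\Xi$ to the configuration supported only at $a$ cannot increase the energy; no delicate interchange of limit and infinite sum is needed.
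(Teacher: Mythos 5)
Your proposal is correct and follows essentially the same route the paper intends: the uniform bound $\ESMMb{\eps}{b}(v_\eps)\le\ESMMb{\eps}{b}(1)=b$, compactness (Theorem~\ref{C1}), and the standard Gamma-convergence argument via Theorem~\ref{G1} and \cite[Theorem 1.21]{Br}, followed by explicit minimization of $\ESMMb{0}{b}$ over $\mathcal{A}_0$. The only difference is that you spell out the final minimization (removing exceptional points off $a$, forcing $\xi^+=1$, and restricting to $p\in[0,1]$), which the paper leaves implicit, and your handling of these reductions is sound.
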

\begin{remark} \label{VM} 
If $F'(v)(v-1) \geq 0$, then $G$ is convex so that $2G(p)+b p^2$ is strictly convex for $b>0$.
 In this case, the minimizer is unique.
 If $F(v)=(v-1)^2$ so that $G(v)=(v-1)^2/2$, then $2G(p)+b p^2=(p-1)^2+b p^2$ and its minimizer is $1/(b+1)$ and its minimal value is $\ESMMb{0}{b}(\Xi_0,M)=b/(b+1)$.
\end{remark}

Our theory has an application to the Kobayashi--Warren--Carter energy~\cite{KWC98,KWC00,KWC} which can be interpreted as an Ambrosio--Tortorelli inhomogenization of the total variation energy.
 Its typical form is 
\[
	\EKWC{\eps}(u,v) := \sigma \int_{\ringM} v^2 \left|\odfrac{u}{x}\right| + \ESMM{\eps}(v)
\]
for $\sigma\geq 0$.
 The first integral denotes the total variation of $u$ with weight $v^2$.
 {See Section~\ref{sec:5} for more rigorous definition.}
 Note that if $u_x=0$ outside $a$ and $u$ jumps at $a$ with jump $1$, then
\[
	\EKWC{\eps}(u,v)=\ESMMb{\eps}{\sigma}(v)
\]
so our $\ESMMb{\eps}{\sigma}(v)$ is considered a special value of $\EKWC{\eps}(u,v)$ by fixing such $u$.
For $\Xi \in \mathcal{A}_0$, let $\Sigma = \{x_i\}^\infty_{i=1}$ be the set of all exceptional points of $\Xi$.
(Note that the set $\Sigma$ can be finite.)
 Let  $\xi^-_i = \min \Xi(x_i)$ for $x_i\in\Sigma$.
 For $u\in BV(\ringM)$, let $J_u$ denote the set of jump discontinuities of $u$, i.e.,
\[
	J_u {:=} \left\{ x \in \ringM \Bigm|
	d(x) = \bigl| u(x+0) - u(x-0) \bigr| >0 \right\}
\] 
where $u(x+0)$ (resp.\ $u(x-0)$) denotes the trace from right (resp.\ left).
 For $(u, \Xi) \in L^1(\ringM) \times \mathcal{B}$, we set
\begin{equation*}
	\EKWC{0}(u,\Xi,M) := \left \{
\begin{array}{l}
	\displaystyle \sigma \int_{\ringM\setminus (J_u \cap \Sigma)} \abs{\odfrac{u}{x}} 
	+ \sigma \sum^\infty_{i=1} d_i \left(\xi^-_i\right)^2 + \ESMM{0}(\Xi,M) \\
	\phantom{\displaystyle \sigma \int_{\ringM\setminus J_u} \left|\frac{du}{dx}\right| 
	+ \sigma \sum^\infty_{i=1} d_i \left(\xi^-_i\right)^2} \text{for}\ u\in BV(\ringM)\ \text{and}\ \Xi \in \mathcal{A}_0, \\
	\infty \quad \text{otherwise,} 
\end{array}
\right.
\end{equation*}
where $d_i = d(x_i)$.
Here $\displaystyle\int_{\Omega} \abs{\odfrac{u}{x}}$ denotes the total variation in $\Omega \subset \ringM$.
Since the measure $\abs{u_x}$ is a continuous measure outside $J_u$ so that $\abs{u_x} (\Sigma \backslash J_u) = 0$, one may replace $\Sigma \cap J_u$ by $\Sigma$ in the domain of integration in the definition of $\EKWC{0}$.
\begin{theorem}[Gamma limit] \label{G2} 
Assume that the same hypotheses of Theorem \ref{G1} concerning $M$ and $F$.
\begin{enumerate}[leftmargin=2em,topsep=0em]
\item[(i)] (liminf inequality) Let $\{v_\eps\}_{0<\eps<1}$ be in $H^1(M) \subset C(M)$.
 Assume that $v_\eps\graphto\Xi\in\mathcal{B}$ as $\eps \to 0$.
 Let $\{u_\eps\}\subset L^1(M)$ satisfy $u_\eps \to u$ in $L^1(\ringM)$ as $\eps \to 0$.
 Then
\[
	\EKWC{0}(u,\Xi,M) \leq \liminf_{\eps\to 0} \EKWC{\eps}(u_\eps,v_\eps).
\]
\item[(ii)] (limsup inequality) For any $\Xi\in\mathcal{A}_0$ and $u\in BV(\ringM)$, there exists $\{w_\eps\}_{0<\eps<1}\subset H^1(\ringM)\subset C(M)$ and $\{u_\eps\}_{0<\eps<1}\subset L^1(M)$ such that $w_\eps \graphto\Xi$ and $u_\eps\to u$ in $L^1$ satisfying 
\[
	\EKWC{0}(u, \Xi, M) = \lim_{\eps\to\infty} \EKWC{\eps}(u_\eps,w_\eps).
\]
\end{enumerate}
\end{theorem}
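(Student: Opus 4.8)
The plan is to treat the two parts of the energy—the weighted total variation $\sigma\int_{\ringM} v^2\abs{\odfrac{u}{x}}$ and the single-well term $\ESMM{\eps}(v)$—as independently as possible, coupling them only through the superadditivity of $\liminf$.

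\textbf{Liminf inequality.} First I would assume $\liminf_{\eps\to 0}\EKWC{\eps}(u_\eps,v_\eps)<\infty$ (otherwise nothing is to prove) and pass to a subsequence realizing the liminf; all the limits below are taken along it. Since the weighted total variation is nonnegative, $\ESMM{\eps}(v_\eps)$ stays bounded, so Theorem~\ref{G1}(i) with $b=0$ gives at once $\Xi\in\mathcal{A}_0$, the bound $\ESMM{0}(\Xi,M)\le\liminf_{\eps\to 0}\ESMM{\eps}(v_\eps)$, and—crucially—that $\ESMM{0}(\Xi,M)<\infty$. Because $G(\xi)>0$ for $\xi\neq 1$, this finiteness forces, for every $\gamma>0$, only finitely many exceptional points $x_1,\dots,x_N$ to satisfy $\xi_i^-<1-\gamma$. (If $u\notin BV(\ringM)$ the estimates below drive the liminf to $+\infty$, matching $\EKWC{0}=\infty$, so we may assume $u\in BV$.) Using superadditivity of $\liminf$, it then remains to prove the separate lower bound
\[
    \liminf_{\eps\to 0}\int_{\ringM} v_\eps^2\abs{\odfrac{u_\eps}{x}}
    \ge \int_{\ringM\setminus(J_u\cap\Sigma)}\abs{\odfrac{u}{x}}+\sum_{i=1}^\infty d_i(\xi_i^-)^2 .
\]

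\textbf{Localization for the weighted total variation.} This is the heart of the argument. Fix $\gamma>0$ and the associated finite family $x_1,\dots,x_N$, and choose $\delta>0$ so small that the balls $B_\delta(x_i)$ are pairwise disjoint and carry no $\abs{Du}$-mass on their boundaries. On the complement $\Omega_\delta:=\ringM\setminus\bigcup_{i\le N}\overline{B_\delta(x_i)}$ every point $x$ satisfies $\liminfstar_{\eps\to 0} v_\eps(x)\ge 1-\gamma$; since $\overline{\Omega_\delta}$ is compact, a finite-covering argument upgrades this to $v_\eps\ge 1-2\gamma$ on $\Omega_\delta$ for small $\eps$, whence
\[
    \int_{\Omega_\delta} v_\eps^2\abs{\odfrac{u_\eps}{x}}\ge (1-2\gamma)^2\abs{Du_\eps}(\Omega_\delta),
\]
and lower semicontinuity of the total variation under $L^1$-convergence gives $\liminf_{\eps\to0}\abs{Du_\eps}(\Omega_\delta)\ge\abs{Du}(\Omega_\delta)$. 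On each $B_\delta(x_i)$ I would instead estimate $v_\eps^2\ge(\inf_{B_\delta(x_i)}v_\eps)^2$; since $\min\Xi(x_i)=\xi_i^-$ and no deeper dip occurs inside $B_\delta(x_i)$, the same covering argument yields $\liminf_{\eps\to0}\inf_{B_\delta(x_i)}v_\eps\ge\xi_i^--\gamma$, while lower semicontinuity gives $\liminf_{\eps\to0}\abs{Du_\eps}(B_\delta(x_i))\ge\abs{Du}(B_\delta(x_i))\ge d_i$. Summing these disjoint contributions and then letting $\delta\to0$ (so that $\abs{Du}(\Omega_\delta)\to\abs{Du}(\ringM)-\sum_{i\le N}d_i$) and finally $\gamma\to0$ (so that $N\to\infty$) delivers the displayed bound; combining it with the $\ESMM{0}$-bound completes~(i).

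\textbf{Limsup inequality.} Here I would keep $u$ fixed, taking $u_\eps\equiv u\in BV(\ringM)\subset L^1(M)$, and borrow the recovery sequence $w_\eps$ of Theorem~\ref{G1}(ii), arranged so that in addition to $w_\eps\graphto\Xi$ and $\ESMM{\eps}(w_\eps)\to\ESMM{0}(\Xi,M)$ one has $w_\eps(x_i)\to\xi_i^-$ at each exceptional point (the optimal transition profile is centred so that its minimum $\xi_i^-$ is attained at $x_i$). Writing $\int_{\ringM} w_\eps^2\abs{\odfrac{u}{x}}=\int_{\ringM} w_\eps^2\,d\abs{Du}$ and splitting off the jump atoms, the continuous part converges by dominated convergence to $\int_{\ringM\setminus(J_u\cap\Sigma)}\abs{\odfrac{u}{x}}$ because $w_\eps\to1$ off $\Sigma$, while each jump atom at $x_i\in J_u\cap\Sigma$ contributes $w_\eps(x_i)^2 d_i\to(\xi_i^-)^2 d_i$; dominated convergence for the series (using $\sum_i d_i<\infty$) then gives the convergence of $\int w_\eps^2 d\abs{Du}$ to $\int_{\ringM\setminus(J_u\cap\Sigma)}\abs{\odfrac{u}{x}}+\sum_i d_i(\xi_i^-)^2$. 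Adding the single-well limit yields $\EKWC{\eps}(u,w_\eps)\to\EKWC{0}(u,\Xi,M)$, as required.

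\textbf{Main obstacle.} The delicate step is the weighted total-variation lower bound when $\Sigma$ is infinite and may accumulate, since one cannot isolate all exceptional points simultaneously. The device that saves the argument is the observation in the first step that finiteness of $\ESMM{0}(\Xi,M)$ leaves only finitely many \emph{deep} dips $\xi_i^-<1-\gamma$: the dangerous points are then finite in number and can be isolated, while on the entire remainder the weight is controlled uniformly from below by $1-2\gamma$.
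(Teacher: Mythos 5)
Your proposal matches the paper's own proof in essentially every respect: the paper also splits the energy, applies Theorem~\ref{G1}(i) to the single-well part, and proves exactly your weighted total-variation lower bound as a standalone lemma (Lemma~\ref{Lemma6}), isolating the finitely many exceptional points with $\xi_i^- < 1-\delta$ (finiteness coming from $\sum_i G(\xi_i^-)<\infty$), using graph convergence to get uniform lower bounds on the weight off and on these small balls, $L^1$-lower semicontinuity of the total variation, and then sending the neighborhood size and $\delta$ to zero. For the limsup inequality the paper likewise takes $u_\eps = u$ and reuses the recovery sequence of Theorem~\ref{G1}(ii) — phrased there as an extension of that theorem to countably many penalty terms $\sum_\ell b_\ell v(a_\ell)^2$ with $b_\ell = \sigma d_\ell$, which is just a packaged form of your dominated-convergence argument for the jump atoms — so the two arguments coincide.
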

\begin{remark}
	\vspace{-1em}
\begin{enumerate}[leftmargin=2em,topsep=0em]
\item[(i)] From the proof of Lemma~\ref{SG1},
it suffices to assume $u_\eps \to u$ in $L^1_{\mathrm{loc}}(\mathring{M}\backslash\Sigma_0)$ where
\[
	\Sigma_0 = \left\{ x \in M \mid \min \Xi (x) = 0 \right\}
\]
in the statement of Theorem~\ref{G2} (i).
 Since $\Xi$ must be in $\mathcal{A}_0$ and $E^0_{\mathrm{sMM}}(\Xi,M) < \infty$, this set $\Sigma_0$ must be a finite set.
\item[(ii)] We may add a fidelty term $\lambda \|u-g\|^2_{L^2(\mathring{M})}$ to energies $E^\eps_{\mathrm{KWC}}$, $E^0_{\mathrm{KWC}}$ for $\lambda>0$ with given $g \in L^2(\mathring{M})$ like the Ambrosio-Tortorelli functional $\mathcal{E}^\eps$ and the Munford-Shah functional.
 More precisely, the statement of Theorem~\ref{G2} is still valid for 
\begin{align*}
 	& \EKWC{\eps,\lambda}(u,v) 
 	:= \EKWC{\eps}(u,v) + \lambda \int_{\mathring{M}} \abs{u-g}^2 \id x \\
 	& \EKWC{0,\lambda}(u,\Xi,M) 
 	:= \EKWC{0}(u,\Xi,M) + \lambda \int_{\mathring{M}} \abs{u-g}^2 \id x.
\end{align*}
\end{enumerate}
\end{remark}

The next compactness result easily follows from the compactness (Theorem~\ref{C1}) in $\mathcal{B}$ and $L^1$-compactness of $BV(\Omega)$, where $\Omega$ is an open set such that $\overline{\Omega} \subset \mathring{M}\backslash \Sigma_0$.
\begin{theorem}[Compactness]
Assume the same hypothesis of Theorem~\ref{C1} concerning $M$ and $F$.
 Let $\lambda >0$ be fixed.
 Let $\left\{v_{\eps_j}\right\}^\infty_{j=1}$ be in $H^1(M) \subset C(M)$ and  $\left\{u_{\eps_j}\right\} \subset L^2(\mathring{M})$.
 Assume that
\[
	\sup_j \EKWC{\eps_j,\lambda} \left(u_{\eps_j},v_{\eps_j}\right)<\infty
\]
for $\eps_j \to 0$.
 Then there exists a subsequence $\left\{\left(u_{\eps'_k},v_{\eps'_k}\right)\right\}$ such that $u_{\eps'_k} \to u$ in $L^1_{\mathrm{loc}}(\mathring{M}\backslash \Sigma_0)$ with some $u \in L^1_{\mathrm{loc}}(\mathring{M}\backslash \Sigma_0)$ and that $v_{\eps'_k} {\xrightarrow{g}} \Xi$ with some $\Xi \in \mathcal{A}_0$.
 Here
\[
	\Sigma_0 = \left\{ x \in M \mid \min \Xi (x) = 0 \right\}.
\]
\end{theorem}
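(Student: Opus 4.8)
The plan is to treat the two unknowns separately: first pin down the set-valued limit $\Xi$ and the bad set $\Sigma_0$ using the scalar compactness already proved, and then, on subregions compactly contained in $\ringM\setminus\Sigma_0$, upgrade the weighted total variation into a genuine $BV$-bound for the $u_{\eps_j}$ and invoke the compact embedding $BV\hookrightarrow L^1$. Since every term of $\EKWC{\eps_j,\lambda}$ is nonnegative, the energy bound gives $\sup_j \ESMM{\eps_j}(v_{\eps_j})<\infty$, so Theorem~\ref{C1} furnishes a subsequence, still written $\{v_{\eps_j}\}$, with $v_{\eps_j}\graphto\Xi$ and $\Xi\in\mathcal{A}_0$. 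Applying the liminf inequality of Theorem~\ref{G1}(i) with $b=0$ then yields $\ESMM{0}(\Xi,M)\le\liminf_j\ESMM{\eps_j}(v_{\eps_j})<\infty$; in either the torus or the interval case this forces $\sum_i G(\xi^-_i)<\infty$, where $\Xi(x_i)=[\xi^-_i,\xi^+_i]$ at the exceptional points.

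Next I would extract geometric consequences of this finite limit energy. Since $\Xi(x)=\{1\}$ off the exceptional set, $\Sigma_0=\{x_i\mid \xi^-_i=0\}$. Because $G(0)=\int_0^1\sqrt{F(\tau)}\id\tau>0$ is a fixed positive constant, the convergence of $\sum_i G(\xi^-_i)$ shows that only finitely many $\xi^-_i$ can vanish, so $\Sigma_0$ is finite; the same convergence forces $G(\xi^-_i)\to 0$, hence $\xi^-_i\to 1$. Consequently, for any open $\Omega'$ with $\overline{\Omega'}\subset\ringM\setminus\Sigma_0$, only finitely many exceptional points of $\overline{\Omega'}$ satisfy $\xi^-_i\le 1/2$, and each such $\xi^-_i$ is strictly positive (none lies in $\Sigma_0$). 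Taking the minimum over this finite set against the value $1/2$ gives $m:=\inf_{x\in\overline{\Omega'}}\min\Xi(x)>0$.

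I would then transfer this to $v_{\eps_j}$. Fix $\Omega\subset\subset\Omega'\subset\subset\ringM\setminus\Sigma_0$ and claim $v_{\eps_j}\ge m/2$ on $\Omega$ for all large $j$. If not, after passing to a subsequence there are $x_j\in\Omega$ with $v_{\eps_j}(x_j)\le m/2$; by compactness $x_j\to\bar x\in\overline{\Omega}$ and $v_{\eps_j}(x_j)\to\bar y\le m/2$. Since $(x_j,v_{\eps_j}(x_j))\in\graph\Gamma_{v_{\eps_j}}$ and, by Lemma~\ref{SG1}, $\graph\Xi=\limsup_j\graph\Gamma_{v_{\eps_j}}$, the limit $(\bar x,\bar y)$ lies in $\graph\Xi$, so $\bar y\ge\min\Xi(\bar x)\ge m$, contradicting $\bar y\le m/2$. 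With this lower bound, and using $\sigma>0$,
\[
\int_\Omega\Bigl|\odfrac{u_{\eps_j}}{x}\Bigr|\le\frac{4}{m^2}\int_\Omega v_{\eps_j}^2\Bigl|\odfrac{u_{\eps_j}}{x}\Bigr|\le\frac{4}{\sigma m^2}\,\sup_j\EKWC{\eps_j,\lambda}(u_{\eps_j},v_{\eps_j})<\infty,
\]
while the fidelity term bounds $\|u_{\eps_j}\|_{L^2(\ringM)}$ and hence $\|u_{\eps_j}\|_{L^1(\Omega)}$. Thus $\{u_{\eps_j}\}$ is bounded in $BV(\Omega)$, and the compact embedding $BV(\Omega)\hookrightarrow L^1(\Omega)$ produces a subsequence converging in $L^1(\Omega)$. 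Exhausting $\ringM\setminus\Sigma_0$ (a finite union of open intervals) by an increasing sequence of such sets $\Omega_n$ and running the standard diagonal argument yields the desired subsequence with $u_{\eps'_k}\to u$ in $L^1_{\mathrm{loc}}(\ringM\setminus\Sigma_0)$.

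The hard part will be the second and third paragraphs, that is, converting the qualitative graph convergence of $\{v_{\eps_j}\}$ into a quantitative, uniform positive lower bound on sets compactly contained in $\ringM\setminus\Sigma_0$. The crucial structural input is that finiteness of $\ESMM{0}(\Xi,M)$ forces $\xi^-_i\to 1$, so that $\min\Xi$ cannot approach $0$ on such sets despite $\Sigma$ possibly being countably infinite; once this is in hand, the transfer to $v_{\eps_j}$ via the closed-graph property and the resulting $BV$-estimate are routine, which is exactly why the statement "easily follows."
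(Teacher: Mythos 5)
Your proposal is correct and is essentially the proof the paper has in mind: the energy bound feeds Theorem~\ref{C1} to produce $v_{\eps_j}\graphto\Xi\in\mathcal{A}_0$, finiteness of $\ESMM{0}(\Xi,M)$ (via Theorem~\ref{G1}(i)) makes $\Sigma_0$ finite and gives a uniform positive lower bound for $\min\Xi$ hence for $v_{\eps_j}$ on sets compactly contained in $\ringM\setminus\Sigma_0$, and then the weighted total variation plus the fidelity term yield a $BV$-bound and $L^1_{\mathrm{loc}}$-compactness by exhaustion and a diagonal argument, exactly the route the paper sketches (and uses again in the proof of Theorem~\ref{G2}(i)). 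Your remark that $\sigma>0$ is needed for the $u$-part is a fair reading of the statement, since for $\sigma=0$ the energy gives no variation control on $u_{\eps_j}$.
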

By combining the Gamma convergence result and the compactness, a general theory yields the convergence of a minimizer of $\EKWC{\eps,\lambda}$; see~\cite[Theorem 1.21]{Br} for example.
\begin{corollary}
Assume the same hypothesis of Theorem~\ref{G1}.
 Let $(u_\eps, v_\eps)$ be a minimizer of $\EKWC{\eps,\lambda}$.
 Then, there is a subsequence $\left\{\left(u_{\eps_k},v_{\eps_k}\right)\right\}^\infty_{k=1}$ such that $v_{\eps_k} {\xrightarrow{g}} \Xi$, $u_{\eps_k} \to u$ in $L^1_{\mathrm{\mathrm{loc}}}(\mathring{M}\setminus \Sigma_0)$
 and that the limit $(u,\Xi_0)$ be a minimizer of  $\EKWC{\eps,\lambda}$.
 Here $\Sigma_0 = \left\{ x \in M \mid \min \Xi_0 (x) = 0 \right\}$.
\end{corollary}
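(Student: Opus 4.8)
The plan is to invoke the standard variational machinery of $\Gamma$-convergence, combining the $\Gamma$-limit (Theorem~\ref{G2}, in the form incorporating the fidelity term recorded in the remark following it) with the compactness theorem established immediately above. First I would secure a uniform energy bound along the minimizing sequence by testing the minimality of $(u_\eps,v_\eps)$ against a fixed admissible competitor. Taking $v\equiv 1$ and $u\equiv 0$ gives $\ESMM{\eps}(1)=0$, a vanishing weighted total variation, and a finite fidelity contribution $\lambda\|g\|^2_{L^2(\ringM)}$; hence by minimality
\[
	\EKWC{\eps,\lambda}(u_\eps,v_\eps)\le \lambda\|g\|^2_{L^2(\ringM)}
\]
uniformly in $\eps$, since $g\in L^2(\ringM)$ is assumed.

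With this bound in hand, the next step is to apply the preceding compactness theorem. It yields a subsequence $\{(u_{\eps_k},v_{\eps_k})\}$ with $v_{\eps_k}\graphto\Xi_0$ for some $\Xi_0\in\mathcal{A}_0$ and $u_{\eps_k}\to u$ in $L^1_{\mathrm{loc}}(\ringM\setminus\Sigma_0)$, where $\Sigma_0=\{x\in M\mid \min\Xi_0(x)=0\}$ is a finite set once $\Xi_0\in\mathcal{A}_0$ with finite limit energy is fixed. This identifies the candidate limit $(u,\Xi_0)$, and it remains only to verify its minimality for $\EKWC{0,\lambda}$.

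For minimality I would run the classical two-sided comparison. Fix an arbitrary competitor $(\tilde u,\tilde\Xi)$ with $\tilde u\in BV(\ringM)$ and $\tilde\Xi\in\mathcal{A}_0$; by the limsup inequality of Theorem~\ref{G2}(ii) (with fidelity term) there is a recovery sequence $(\tilde u_\eps,\tilde w_\eps)$ with $\tilde w_\eps\graphto\tilde\Xi$, $\tilde u_\eps\to\tilde u$ in $L^1$, and $\EKWC{\eps,\lambda}(\tilde u_\eps,\tilde w_\eps)\to \EKWC{0,\lambda}(\tilde u,\tilde\Xi,M)$. Minimality of $(u_{\eps_k},v_{\eps_k})$ gives $\EKWC{\eps_k,\lambda}(u_{\eps_k},v_{\eps_k})\le \EKWC{\eps_k,\lambda}(\tilde u_{\eps_k},\tilde w_{\eps_k})$. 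Passing to the liminf on the left via the liminf inequality of Theorem~\ref{G2}(i) and to the limit on the right yields
\[
	\EKWC{0,\lambda}(u,\Xi_0,M)\le \liminf_{k\to\infty}\EKWC{\eps_k,\lambda}(u_{\eps_k},v_{\eps_k})\le \EKWC{0,\lambda}(\tilde u,\tilde\Xi,M).
\]
Since $(\tilde u,\tilde\Xi)$ is arbitrary, $(u,\Xi_0)$ minimizes $\EKWC{0,\lambda}$; moreover the finiteness of the right-hand side forces $\EKWC{0,\lambda}(u,\Xi_0,M)<\infty$, so that $u\in BV(\ringM)$ automatically, upgrading the $L^1_{\mathrm{loc}}$ regularity furnished by compactness.

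The one point requiring care, and which I regard as the main obstacle, is the mismatch of topologies: compactness delivers convergence of $u_{\eps_k}$ only in $L^1_{\mathrm{loc}}(\ringM\setminus\Sigma_0)$, not in $L^1(\ringM)$, because the weight $v^2$ degenerates near $\Sigma_0$. Applying the liminf inequality therefore relies on its sharpened form recorded in the remark after Theorem~\ref{G2}, which asserts that convergence in $L^1_{\mathrm{loc}}(\ringM\setminus\Sigma_0)$ already suffices. Checking that the hypotheses of that remark are genuinely met for the minimizing subsequence, in particular that the $\Sigma_0$ attached to the limit $\Xi_0$ is exactly the set outside which $L^1_{\mathrm{loc}}$ convergence holds, is the only step that is not purely formal.
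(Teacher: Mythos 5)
Your proposal is correct and takes essentially the same route as the paper, whose entire proof is to combine Theorem~\ref{G2} (with the fidelity-term extension in the remark) and the preceding compactness theorem and then cite the fundamental theorem of $\Gamma$-convergence (Braides, Theorem~1.21) for exactly the two-sided comparison you spell out. Your explicit competitor bound $\EKWC{\eps,\lambda}(u_\eps,v_\eps)\le\lambda\|g\|^2_{L^2(\ringM)}$ and your handling of the $L^1_{\mathrm{loc}}(\ringM\setminus\Sigma_0)$ topology mismatch via the remark after Theorem~\ref{G2} simply fill in details that the paper delegates to the cited general theory.
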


\section{Unfolding by arc-length parameters} \label{sec:3} 

For a bounded open interval $I$ let $u$ be a real-valued $C^1$ function on $\overline{I}$, that is, $u \in C^1(\overline{I})$.
 To simplify notation, we set $I=(0,r)$.
 Then the arc-length parameter $s$ of the graph curve $y=u(x)$ is defined as
\[
	s(x) = s_u(x) := \int^x_0 \left( 1+u^2_x(z) \right)^{1/2} \id z.
\]
One is able to extend this definition for general $u \in BV(I)$.
 By definition, $s(\cdot)$ is strictly monotone increasing.
 It is easy to see that {$s(\cdot)$} is continuous
 if and only if the derivative $u_x$ has no point mass, {that is, $u$ has no jump, which is equivalent to $u \in C(\overline{I})$.}
 The inverse function $x=x(s)$ of $s=s(x)$ is always Lipschitz with Lipschitz constant 1,
 that is, $\operatorname{Lip}(x) \leq 1$.
 Indeed, since $\displaystyle\odfrac{x}{s}=(1+u^2_x)^{-1/2}$, the inequality $\displaystyle\abs{\odfrac{x}{s}}\leq 1$ always holds.
 For $u \in C(\overline{I}) \cap BV(I)$, we define an unfolding $U$ by arc-length parameter of the form
\[
	U(s) = u \left( x(s) \right).
\]
 The function $U$ is defined on $\overline{J}$ with $J=(0,L)$,
 where $L$ is the length of the graph $u$ on $\overline{I}$.

We begin with several basic properties of the unfoldings.
\begin{lemma} \label{L1} 
Assume that $u \in W^{1,1}(\overline{I})$.
\begin{enumerate}
\item[(i)]  
 $U$ is Lipschitz continuous on $J$.
 More precisely, $\mathrm{Lip}(U) \leq 1$.
\item[(ii)]
 The total variation of $U$ on $J$ equals that of $u$ in $I$, that is,
\[
	\mathrm{TV}(u) = \mathrm{TV}(U).
\]
\end{enumerate}
\end{lemma}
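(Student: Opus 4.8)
The plan is to prove both assertions by elementary estimates that exploit the absolute continuity of $u$, which is automatic here: since $u \in W^{1,1}(\overline{I})$ and $I$ is one-dimensional, $u$ has an absolutely continuous representative, so the fundamental theorem of calculus $u(x_2)-u(x_1)=\int_{x_1}^{x_2} u_x \id z$ is available, and $s(\cdot)$ is continuous and strictly increasing. Consequently $x(\cdot)\colon \overline{J}\to\overline{I}$ is a continuous, strictly monotone bijection, which is all the structure the argument needs.

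For (i) I would argue directly from the fundamental theorem of calculus rather than differentiating $U$. Fix $s_1<s_2$ in $J$ and set $x_i=x(s_i)$, so that $x_1<x_2$ by monotonicity of $x(\cdot)$. Then $U(s_2)-U(s_1)=u(x_2)-u(x_1)$, and
\[
	\abs{U(s_2)-U(s_1)} = \abs{\int_{x_1}^{x_2} u_x(z)\id z} \leq \int_{x_1}^{x_2}\abs{u_x(z)}\id z \leq \int_{x_1}^{x_2}\left(1+u_x^2(z)\right)^{1/2}\id z = s(x_2)-s(x_1) = s_2-s_1,
\]
where the middle inequality is the pointwise bound $\abs{t}\leq(1+t^2)^{1/2}$ and the final equality is the definition of the arc-length parameter. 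This yields $\mathrm{Lip}(U)\leq 1$ at once.

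For (ii) the cleanest route is to invoke the invariance of total variation under monotone reparametrization, which sidesteps any chain-rule subtleties for compositions of merely absolutely continuous maps. Writing $\mathrm{TV}(U)$ as the supremum of $\sum_i \abs{U(s_{i+1})-U(s_i)}$ over partitions $0=s_0<\dots<s_n=L$ of $\overline{J}$, and setting $x_i=x(s_i)$, the map $x(\cdot)$ sends each such partition to a partition $0=x_0<\dots<x_n=r$ of $\overline{I}$, and conversely every partition of $\overline{I}$ arises this way because $x(\cdot)$ is a strictly monotone bijection. Since $U(s_i)=u(x_i)$, the two sums coincide termwise, and taking suprema gives $\mathrm{TV}(U)=\mathrm{TV}(u)$.

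This lemma is essentially bookkeeping, so the only point requiring genuine care—the nearest thing to an obstacle—is the regularity justification: one must verify that $s(\cdot)$ is a homeomorphism onto $\overline{J}$, i.e.\ continuous with no jumps, which is where continuity of $u$ (equivalently $u_x$ having no point mass) is used, and one must confirm $(1+u_x^2)^{1/2}\in L^1(I)$ via $(1+t^2)^{1/2}\leq 1+\abs{t}$ so that $L=s(r)<\infty$. As a cross-check of (ii) one may instead differentiate $U$ using the chain rule $U_s=u_x(x(s))\,(1+u_x^2(x(s)))^{-1/2}$, valid a.e.\ for the composition of the absolutely continuous $u$ with the Lipschitz monotone $x$, and then apply the change of variables $\id s=(1+u_x^2)^{1/2}\id x$ to recover $\int_J\abs{U_s}\id s=\int_I\abs{u_x}\id x$; I would nonetheless present the reparametrization argument as the primary one precisely because it avoids justifying that chain rule.
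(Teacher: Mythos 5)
Your proof is correct, but it deliberately takes a more elementary route than the paper's. The paper proves (i) in one line by differentiating the composition, $U_s = u_x/\left(1+u_x^2\right)^{1/2}$, whence $\abs{U_s}\leq 1$, and proves (ii) by the change of variables $\mathrm{TV}(U)=\int_0^L\abs{U_s}\id s=\int_0^r\abs{u_x}\id x=\mathrm{TV}(u)$; both steps rest on the a.e.\ chain rule for $u\circ x$ and the area (change-of-variables) formula for the absolutely continuous homeomorphism $s\mapsto x(s)$, which the paper leaves implicit. Your argument replaces these by the fundamental theorem of calculus plus the pointwise bound $\abs{t}\leq(1+t^2)^{1/2}$ for (i), and by invariance of pointwise variation under the strictly monotone bijection $x(\cdot)$ (termwise identification of partition sums) for (ii); this avoids every measure-theoretic subtlety about compositions, at the mild cost of having to note that for the absolutely continuous functions $u$ and $U$ the partition-based variation coincides with $\int\abs{u_x}$, respectively $\int\abs{U_s}$, which is the form of $\mathrm{TV}$ the paper actually uses. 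One thing the paper's computation buys that yours does not: the explicit formula $U_s=u_x\left(1+u_x^2\right)^{-1/2}$ is reused immediately after the lemma to derive $\odfrac{x}{s}=\left(1-U_s^2\right)^{1/2}$, so on your route that identity (your ``cross-check'') would still need to be justified separately when it is needed later.
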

\begin{proof}
\begin{enumerate}
\item[(i)]  
Since
\[
	U_s = \frac{u_x}{\left( 1+u^2_x \right)^{1/2}},
\]
$\mathrm{Lip}(U) \leq 1$ is rather clear.
\item[(ii)]
By definition,
\[
	\mathrm{TV}(U) = \int^L_0\abs{U_s}{\id} s = \int^r_0\abs{u_x}\id x = \mathrm{TV}(u).
\]
\end{enumerate}
\end{proof}
Since $q = p/(1+p^2)^{1/2}$ is equivalent to $p=q/(1-q^2)^{1/2}$, we see that
\[
	\odfrac{x}{s} = \frac{1}{(1+u^2_x)^{1/2}} = \left( 1-U^2_s \right)^{1/2}
\]
We next discuss compactness for unfoldings and the lower semicontinuity of $\mathrm{TV}(\cdot)$.
\begin{lemma} \label{L2} 
Assume that $\{u^\eps\}_{0<\eps<1} \subset W^{1,1}(I)$ with a bound for $\mathrm{TV}(u^\eps)$ and $\|u^\eps\|_\infty$.
 Then there is a subsequence such that $U^\eps$ tends to some function $V$ with $\mathrm{Lip}(V) \leq 1$ uniformly in a domain of definition of $V$.
 Moreover, $\mathrm{TV}(V) \leq \liminf_{\eps \to 0} \mathrm{TV}(u^\eps)$.
\end{lemma}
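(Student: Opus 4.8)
The plan is to reduce everything to the Arzel\`a--Ascoli theorem together with the lower semicontinuity of total variation, the only genuine issue being that the unfoldings $U^\eps$ live on intervals $\overline{J^\eps}=[0,L^\eps]$ whose length $L^\eps$ (the length of the graph of $u^\eps$) varies with $\eps$. First I would record that this length is uniformly bounded. Indeed, using $(1+p^2)^{1/2}\leq 1+|p|$ one has
\[
	L^\eps = \int_0^r \left(1+(u^\eps_x)^2\right)^{1/2} \id z \leq r + \int_0^r \abs{u^\eps_x} \id z = r + \mathrm{TV}(u^\eps),
\]
so the hypothesis $\sup_\eps \mathrm{TV}(u^\eps)<\infty$ gives $r \leq L^\eps \leq r+C$ for some constant $C$ independent of $\eps$. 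Passing to a subsequence I may therefore assume $L^\eps \to L_*$ for some $L_* \in [r, r+C]$.

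Next I would place all the unfoldings on one fixed interval. Since $\mathrm{Lip}(U^\eps)\leq 1$ by Lemma~\ref{L1}(i), I extend $U^\eps$ to $[0,r+C]$ by freezing it past $L^\eps$, that is, $\tilde U^\eps(s):=U^\eps(\min(s,L^\eps))$; the extension is still $1$-Lipschitz and satisfies $\|\tilde U^\eps\|_\infty = \|U^\eps\|_\infty = \|u^\eps\|_\infty$, which is bounded by hypothesis. The family $\{\tilde U^\eps\}$ is thus uniformly bounded and equicontinuous on the fixed compact interval $[0,r+C]$, so by Arzel\`a--Ascoli a further subsequence converges uniformly to some $V$ on $[0,r+C]$, and the limit inherits $\mathrm{Lip}(V)\leq 1$. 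Because each $\tilde U^\eps$ is constant on $[L^\eps, r+C]$ and $L^\eps\to L_*$, for any $s>L_*$ one has $L^\eps<s$ for small $\eps$, whence $V$ is constant on $[L_*, r+C]$; thus $V$ is determined by its restriction to the natural domain $\overline{J_*}=[0,L_*]$, on which $U^\eps \to V$ uniformly.

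Finally, for the total variation bound I would combine Lemma~\ref{L1}(ii) with the standard lower semicontinuity of $\mathrm{TV}(\cdot)$ under uniform (indeed pointwise) convergence. Since $\tilde U^\eps$ is constant beyond $L^\eps$, its total variation on $[0,r+C]$ equals $\mathrm{TV}(U^\eps)=\mathrm{TV}(u^\eps)$; likewise $\mathrm{TV}(V)$ on $[0,r+C]$ equals $\mathrm{TV}(V)$ on $[0,L_*]$ because $V$ is constant afterwards. Lower semicontinuity then yields
\[
	\mathrm{TV}(V) \leq \liminf_{\eps\to 0}\mathrm{TV}(\tilde U^\eps) = \liminf_{\eps\to 0}\mathrm{TV}(u^\eps),
\]
as claimed.

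The main obstacle is the bookkeeping forced by the moving domains $J^\eps$: one must first secure the uniform bound and a convergent subsequence for the lengths $L^\eps$, and then arrange a single common interval (via the constant extension above) on which both Arzel\`a--Ascoli and the lower semicontinuity of total variation apply cleanly. Everything else---equicontinuity from the uniform Lipschitz bound, uniform boundedness from $\|u^\eps\|_\infty$, and the identity $\mathrm{TV}(U^\eps)=\mathrm{TV}(u^\eps)$---is supplied directly by Lemma~\ref{L1} and the standing hypotheses.
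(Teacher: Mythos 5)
Your proof is correct and follows essentially the same route as the paper: bound the graph lengths via $\mathrm{TV}(u^\eps)$, apply Arzel\`a--Ascoli using the uniform Lipschitz bound from Lemma~\ref{L1}(i), and conclude with lower semicontinuity of $\mathrm{TV}(\cdot)$ together with the identity $\mathrm{TV}(U^\eps)=\mathrm{TV}(u^\eps)$ from Lemma~\ref{L1}(ii). The only difference is that you carefully handle the varying domains $[0,L^\eps]$ by constant extension to a common interval, a bookkeeping step the paper's terse proof leaves implicit.
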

\begin{proof}
Since $\mathrm{TV}(u^\eps)$ is bounded, so is the length $L_\eps$ of the graph of $u^\eps$.
 The existence of convergent subsequence follows from the Ascoli-Arzela theorem.
 A basic lower semicontinuity of $\mathrm{TV}(\cdot)$ yields
\[
	\mathrm{TV}(V) \leq \liminf_{\eps \to 0} \mathrm{TV}(U^\eps).
\]
The right-hand side equals $\mathrm{TV}(u^\eps)$ as proved in Lemma \ref{L1} (i\!i) so the proof is now complete.
\end{proof}
{We raise a question whether or not a Lipschitz function $V$ on $J$ with $\mathrm{Lip}(V) \leq 1$ can be written as $u\left(x(s)\right)$.
This is in general not true if there is a non trivial interval such that $U_s=1$ (or $U_s = -1$). Indeed, if $U_s = \pm 1$, then $x(s)$ is not invertible.

In spite of this lack of the correspondence, however, the following lemma states that the limit of the unfolding contains the information on the pointwise behaviour of $u^{\eps_k}$. (See also Figure~\ref{fig:T3}.)}
\begin{theorem} \label{T3} 
Assume that $\{u^{\eps_k}\}^\infty_{k=1}\subset W^{1,1}(I)$ with a bound for $\mathrm{TV}(u^{\eps_k})$
{and its unfolding} $U^{\eps_k}$ converges uniformly to $V$ in a domain $J$ of definition of $V$.
 (The domain $J$ must be a bounded interval by a bound of $\mathrm{TV}(u^\eps)$.)
 {If $x^{\eps_k}$, the inverse of the arc-length parameter of $u^{\eps_k}$, converges uniformly to a limit $\overline{x}$ in $J$,
 then}
\[
\begin{aligned}
\left( \limsupstar_{k\to\infty} \,u^{\eps_k} \right)(x) &= \max\left\{ V(s) \mid \overline{x}(s)=x \right\} \\
\left( \liminfstar_{k\to\infty} \,u^{\eps_k} \right)(x) &= \min\left\{ V(s) \mid \overline{x}(s)=x \right\}, \quad x \in \overline{I}.
\end{aligned}
\]
\end{theorem}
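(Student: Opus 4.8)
The plan is to prove each of the two identities by a pair of matching inequalities, the engine being compactness of the (bounded) parameter interval $\overline{J}$ together with the two hypothesized uniform convergences $U^{\eps_k}\to V$ and $x^{\eps_k}\to\overline{x}$. I treat the $\limsupstar$ identity in detail; the $\liminfstar$ case follows verbatim after interchanging $\sup/\inf$ and $\max/\min$. Fix $x\in\overline{I}=[0,r]$ and abbreviate $\ell(x):=(\limsupstar_{k\to\infty}u^{\eps_k})(x)$ and $m(x):=\max\{V(s)\mid \overline{x}(s)=x\}$. First I would record that the competitor set $\{s\in\overline{J}\mid \overline{x}(s)=x\}$ is nonempty and compact: it is closed since $\overline{x}$ is continuous (a uniform limit of the $1$-Lipschitz maps $x^{\eps_k}$, hence itself $1$-Lipschitz), and nonempty since each $x^{\eps_k}$ maps onto $\overline{I}$, so that $\overline{x}(0)=0$, $\overline{x}$ takes the value $r$ at the right end of $\overline{J}$, and the intermediate value theorem supplies a preimage of $x$. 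As $V$ is continuous, $m(x)$ is attained.

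For the inequality $\ell(x)\geq m(x)$, I would pick $s_0\in\overline{J}$ with $\overline{x}(s_0)=x$ and $V(s_0)=m(x)$, and set $y_k:=x^{\eps_k}(s_0)$. Then $y_k\to\overline{x}(s_0)=x$ and $u^{\eps_k}(y_k)=U^{\eps_k}(s_0)\to V(s_0)=m(x)$, so $\{(k,y_k)\}$ is an approximating family for $x$. Feeding it into the definition of $\limsupstar$ — for each $j$ the pair $(k,y_k)$ is admissible once $k\geq j$ and $\abs{y_k-x}<1/j$ — yields $\ell(x)\geq m(x)$ upon letting $j\to\infty$.

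The substantive direction is $\ell(x)\leq m(x)$, which I expect to be the main obstacle. Using the definition of $\limsupstar$ I would extract indices $m_j\to\infty$ and points $y_j\to x$ with $u^{\eps_{m_j}}(y_j)\to\ell(x)$. Let $s_j:=s^{\eps_{m_j}}(y_j)$ be the associated arc-length value, so that $x^{\eps_{m_j}}(s_j)=y_j$ and $U^{\eps_{m_j}}(s_j)=u^{\eps_{m_j}}(y_j)$. Since $\mathrm{TV}(u^{\eps_k})$ is bounded, the lengths $L_{\eps_{m_j}}$ stay bounded, so the $s_j$ lie in a fixed bounded set and, after passing to a subsequence, $s_j\to s_*\in\overline{J}$. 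The crux is a pair of triangle-inequality estimates, each splitting into a ``continuity of the limit'' term and a ``uniform convergence'' term. From $\abs{\overline{x}(s_*)-y_j}\leq\abs{\overline{x}(s_*)-\overline{x}(s_j)}+\|x^{\eps_{m_j}}-\overline{x}\|_\infty$ I conclude $\overline{x}(s_*)=\lim_j y_j=x$, and from $\abs{V(s_*)-u^{\eps_{m_j}}(y_j)}\leq\abs{V(s_*)-V(s_j)}+\|U^{\eps_{m_j}}-V\|_\infty$ I conclude $u^{\eps_{m_j}}(y_j)\to V(s_*)$, whence $\ell(x)=V(s_*)$. As $\overline{x}(s_*)=x$, the point $s_*$ is a competitor, so $\ell(x)=V(s_*)\leq m(x)$, completing the identity.

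The only genuinely delicate point, which I would isolate carefully, is the bookkeeping of the varying domains $\overline{J^{\eps_k}}=[0,L_{\eps_k}]$: a priori the arc-length values $s_j$ need not lie in a common interval. I would normalize by extending every $U^{\eps_k}$ and $x^{\eps_k}$ beyond $L_{\eps_k}$ by their right-endpoint values (which alters neither the graph, nor the total variation, nor the relaxed limits), so that all functions share the common domain $\overline{J}=[0,L]$ with $L=\lim_k L_{\eps_k}$; then any accumulation point $s_*$ of $\{s_j\}$ lies in $\overline{J}$ because $\abs{x^{\eps_k}(L)-x^{\eps_k}(L_{\eps_k})}\leq\abs{L-L_{\eps_k}}\to 0$ forces the endpoint values to behave, and both triangle inequalities above then invoke genuine uniform convergence on $\overline{J}$. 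With this in place the extraction of $s_*$ is merely compactness of $\overline{J}$, and I anticipate no further difficulty; the $\liminfstar$ statement is obtained by the obvious sign changes.
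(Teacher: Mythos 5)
Your proposal is correct, and the easy inequality $\bigl(\limsupstar u^{\eps_k}\bigr)(x)\geq\max_{J_x}V$ coincides with the paper's argument: both evaluate along a maximizer $s_*$ of $V$ on the fiber $J_x=\{s\mid\overline{x}(s)=x\}$ and feed $y_k=x^{\eps_k}(s_*)\to x$, $u^{\eps_k}(y_k)=U^{\eps_k}(s_*)\to V(s_*)$ into the definition of the relaxed limit. For the hard inequality, however, you argue by pulling back: you take a sequence $(y_j,\eps_{m_j})$ realizing $\limsupstar$, transport it to the arc-length domain via $s_j=s^{\eps_{m_j}}(y_j)$, extract $s_j\to s_*$ by compactness of $\overline{J}$, and use two triangle inequalities (continuity of $\overline{x}$, $V$ plus the two uniform convergences) to conclude $\overline{x}(s_*)=x$ and $V(s_*)=\ell(x)$, hence $\ell(x)\leq\max_{J_x}V$. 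The paper instead pushes forward: it thickens the fiber to $J^{2\sigma}_x=\{s\mid|\overline{x}(s)-x|\leq 2\sigma\}$, shows that for $k>k_0(\sigma)$ the image $x^{\eps_k}(J^{2\sigma}_x)$ covers the ball $\{|y-x|\leq\sigma\}$, deduces $\sup_{|y-x|\leq\sigma}u^{\eps_k}\leq\sup_{J^{2\sigma}_x}U^{\eps_k}$, and then runs a coupled double limit $\sigma\downarrow 0$, $k>k_0(\sigma)$ together with set inclusions to identify the left-hand side with $\limsupstar u^{\eps_k}$. Your sequential-compactness route is more elementary — it avoids the $k_0(\sigma)$ bookkeeping and the final inclusion argument entirely — and it has the added merit of making explicit two points the paper glosses over: nonemptiness and compactness of the fiber (via surjectivity of $\overline{x}$ and the intermediate value theorem) and the normalization of the varying domains $[0,L_{\eps_k}]$ by constant extension, which is needed for the uniform-convergence hypotheses to be applied on a common interval. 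What the paper's covering argument buys in exchange is uniformity: it controls suprema over entire shrinking neighborhoods at once, without passing to any subsequence (your extraction of $s_j\to s_*$ is harmless here, since the quantity $u^{\eps_{m_j}}(y_j)$ already converges along the full sequence, but the paper's proof never needs the extraction at all). Both proofs are complete for the stated theorem.
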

\begin{figure}
    \centering
    \includegraphics{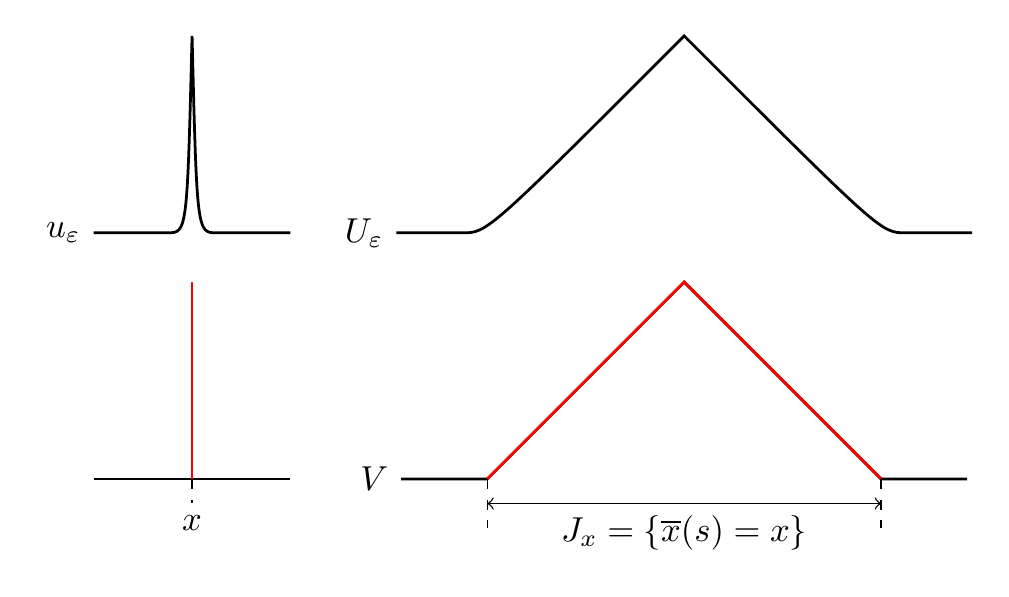}
    \caption{The visual example of Theorem~\ref{T3}.
    The sequence of function $u^\eps$ is unfolded to $U^\eps$ (the upper right figure).
    When $U^\eps$ converges uniformly to $V$,
    the corresponding limit of $u^\eps$ can be no longer captured as single-valued function
    but is possibly multi-valued.
    The red part of the graph of $V$ (the lower right image), however,
    corresponds to the multi-valued part (the red part in the lower left image)
    and its maximum and minimum coincide with the upper and lower relaxed limit of $u^\eps$, respectively.}
    \label{fig:T3}
\end{figure}
\begin{proof}
Since the proof is symmetric, we only give a proof for $\limsupstar$.
 Let $J_x = \left\{ s \in J \mid \overline{x}(s)=x \right\}$.
 We take $s_* \in J_x$ such that
\[
	V(s_*) = \max_{J_x}V.
\]
Since $V$ is the limit of $U^\eps$, we have
\[
	V(s_*) = \lim_{k\to\infty} u^{\eps_k} \left( x^{\eps_k}(s_*) \right) 
	\leq \left( \limsupstar_{k\to\infty} u^{\eps_k}  \right)(x).
\]

To prove the converse inequality, we set
\[
	J^\sigma_x = \left\{ s \in J \bigm| \left| \overline{x}(s)-x \right| \leq \sigma \right\}.
\]
Since $x^{\eps_k}$ converges to $\overline{x}$ uniformly in $J$, for sufficiently large $k$, say $k>k_0(\sigma)$,
\[
	x^{\eps_k} (J^{2\sigma}_x) \supset \left\{ y \in I \bigm| |y-x| \leq \sigma \right\};
\]
here $k_0(\sigma)$ can be taken so that $k_0(\sigma)\to\infty$ as $\sigma\to\infty$ and $k_0(\sigma)>1/\sigma$.
 We thus observe that
\[
	\sup_{|y-x|\leq\sigma} u^{\eps_k}(y) 
	\leq \sup \left\{ u^{\eps_k}(y) \bigm| y \in x^{\eps_k}(J^{2\sigma}_x) \right\}
	= \sup \left\{ U^{\eps_k}(s) \bigm| s \in J^{2\sigma}_x \right\}
\]
for $k>k_0(\sigma)$.
 Sending $\sigma\to 0$, we observe that
\[
	\lim_{\sigma\downarrow 0} \sup_{\substack{|y-x|\leq\sigma \\ k>k_0(\sigma)}} u^{\eps_k}(y)
	\leq \max_{s \in J_x}V(s).
\]
The left-hand side agrees with $\displaystyle\limsupstar_{k\to\infty} u^{\eps_k}$ since
\begin{align*}
	& \left\{ (y,k) \bigm| |y-x| < 1/k_0(\sigma),\ k>k_0(\sigma) \right\} \\
	\subset & \left\{ (y,k) \bigm| |y-x| < \sigma,\ k>k_0(\sigma) \right\} \\
	\subset & \left\{ (y,k) \bigm| |y-x| < \sigma,\ k>1/\sigma \right\}.
\end{align*}
We thus conclude that
\[
	\left( \limsupstar_{k\to\infty} u^{\eps_k} \right) (x)
	\leq \max\left\{ V(s) \bigm| s \in J_x \right\}. 
\]
The proof is now complete.
\end{proof}
We next prove the inequality connecting the total variation and the relaxed limit in terms of the unfolding. (see Figure~\ref{fig:thm5}.)
\begin{figure}[tbp]
    \centering
    \begin{tikzpicture}[scale=0.75]
    \draw[thick] (0,0) node[left] {$0$} -- (5,0);
    \draw[thick] (1,0) node[below] {$x_1$} -- (1,2);
    \draw[thick] (3,0) node[below] {$x_2$} -- (3,1);
    \draw[thick] (4,0) node[below] {$x_3$} -- (4,0.5);
    \begin{scope}[yshift=-4cm]
        \draw[thick] (0,0) node[left] {$0$} -- (1,0) -- (3,2) -- (5,0) -- (7,0) -- (8,1) -- (9,0) -- (10,1) -- (11,0) -- (12,0) -- (12.5,0.5) -- (13,0) -- (14,0);
        \node[left] at (0,2) {$V$};
        \draw[dashed] (1,0) -- (1,-1);
        \draw[dashed] (5,0) -- (5,-1);
        \draw[<->] (1,-0.5) -- node[below] {$J_{x_1,1}$} (5,-0.5);
        \draw[dashed] (7,0) -- (7,-1);
        \draw[dashed] (9,0) -- (9,-1);
        \draw[<->] (7,-0.5) -- node[below] {$J_{x_2,1}$} (9,-0.5);
        \draw[dashed] (11,0) -- (11,-1);
        \draw[<->] (9,-0.5) -- node[below] {$J_{x_2,2}$} (11,-0.5);
        \draw[dashed] (12,0) -- (12,-1);
        \draw[dashed] (13,0) -- (13,-1);
        \draw[<->] (12,-0.5) -- node[below] {$J_{x_3,1}$} (13,-0.5);
    \end{scope}
\end{tikzpicture}
    \caption{The visual explanation of Theorem~\ref{T4}. If the graph of  $\{u^{\eps_k}\}$ converges to the graph as the top,
    its unfolding converges to $V$, whose graph is like the bottom.
    Then $J_{x_i} = \{s \in J \mid \overline{x}(s) = x_i\}$ can be decomposed as the union of $\{J_{x_i,j}\}_{j=1,2,\dots}$ by labelling the disjoint intervals where $V$ does not vanish.
    }
    \label{fig:thm5}
\end{figure}
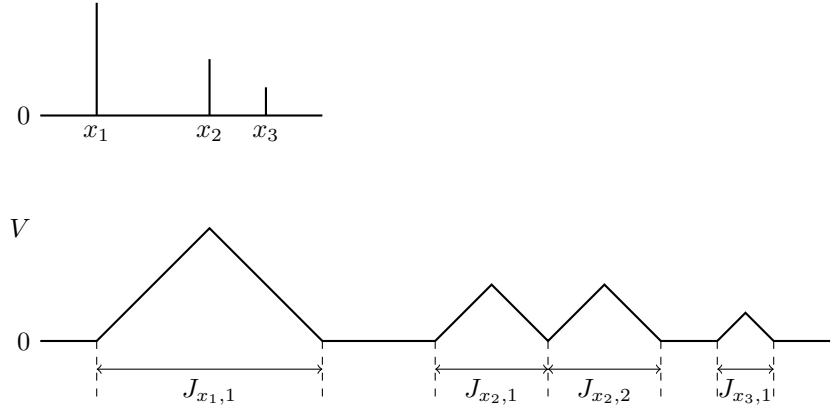
\begin{theorem} \label{T4} 
Assume the same hypothesis of Theorem \ref{T3}.
 Then the set $\Sigma$ of points $x$ where
\[
	\limsupstar_{k\to\infty} u^{\eps_k} (x) 
	> \liminfstar_{k\to\infty} u^{\eps_k} (x) 
\]
{has at most countable cardinality.}
 Assume furthermore that outside $\Sigma$ the limit must be zero and $\displaystyle\liminf_{k\to\infty}{}_* u^{\eps_k} (x)=0$ for all $x \in I$.
 Then 
\[
	\liminf_{k\to\infty} \mathrm{TV}(u^{\eps_k})
	\geq \sum_{x\in\Sigma} 2 \chi(x) \limsupstar_{k\to\infty} u^{\eps_k} (x),
\]
where $\chi(x)=1$ for $x \in I$ and $\chi(x)=1/2$ for $x\in\partial I$.
\end{theorem}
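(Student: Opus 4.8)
The plan is to transfer the whole question to the limiting unfolding $V$ and read off its total variation there. Throughout I write $J_x = \{s \in J \mid \overline{x}(s) = x\}$ as in the proof of Theorem~\ref{T3}, and I use from that theorem the identities $\limsupstar_{k\to\infty} u^{\eps_k}(x) = \max_{J_x} V$ and $\liminfstar_{k\to\infty} u^{\eps_k}(x) = \min_{J_x} V$. Each $x^{\eps_k}$ is strictly increasing, being the inverse of the strictly increasing arc-length function $s^{\eps_k}$, so its uniform limit $\overline{x}$ is nondecreasing and $1$-Lipschitz; hence every fiber $J_x$ is a closed subinterval of $J$ and distinct fibers are disjoint. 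A point $x$ belongs to $\Sigma$ precisely when $\max_{J_x} V > \min_{J_x} V$, which forces $J_x$ to be nondegenerate. Since the nondegenerate fibers are pairwise disjoint intervals of positive length inside the bounded interval $J$, there can be at most countably many of them, which gives the first assertion.

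For the variation estimate I would first reduce to a lower bound for $\mathrm{TV}(V)$. By Lemma~\ref{L1}(ii) we have $\mathrm{TV}(u^{\eps_k}) = \mathrm{TV}(U^{\eps_k})$, and lower semicontinuity of the total variation under the uniform convergence $U^{\eps_k}\to V$ (as in Lemma~\ref{L2}) gives $\liminf_{k\to\infty} \mathrm{TV}(u^{\eps_k}) = \liminf_{k\to\infty} \mathrm{TV}(U^{\eps_k}) \geq \mathrm{TV}(V)$. Under the standing hypotheses $\min_{J_x} V = \liminfstar_{k\to\infty} u^{\eps_k}(x) = 0$ for every $x$, and $\max_{J_x} V = 0$ whenever $x \notin \Sigma$; consequently $V \geq 0$ on $J$ and $V$ vanishes identically on every fiber over $\overline{I}\setminus\Sigma$. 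Writing $h_x := \max_{J_x} V = \limsupstar_{k\to\infty} u^{\eps_k}(x)$ for $x \in \Sigma$, the target reduces to $\mathrm{TV}(V) \geq \sum_{x\in\Sigma} 2\chi(x)\,h_x$.

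The key structural observation is that each connected component of the open set $\{V>0\}$ lies inside a single fiber. Indeed, on such a component $\overline{x}$ takes values only in the countable set $\Sigma$ (since $V(s)>0$ forces $\overline{x}(s)\in\Sigma$), and a continuous function on an interval valued in a countable set is constant; this is exactly the decomposition $J_x\cap\{V>0\}=\bigcup_j J_{x,j}$ drawn in Figure~\ref{fig:thm5}. The same argument shows that $V$ vanishes at both endpoints of every interior fiber: if $V$ were positive at such an endpoint, that point would lie in a component of $\{V>0\}$ extending beyond the fiber, contradicting that components stay within one fiber. Hence for $x\in\Sigma\cap I$ the function $V$ rises from $0$ to $h_x$ and returns to $0$ across $J_x$, so $\mathrm{TV}(V;J_x)\geq 2h_x$; for $x\in\Sigma\cap\partial I$ the fiber abuts an endpoint of $J$ and only its interior endpoint is forced to vanish, giving the one-sided bound $\mathrm{TV}(V;J_x)\geq h_x = 2\chi(x)h_x$. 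Since the fibers are pairwise disjoint, summing these local estimates yields $\mathrm{TV}(V)\geq\sum_{x\in\Sigma}2\chi(x)h_x$, which finishes the proof.

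The step I expect to be the main obstacle is the endpoint-vanishing claim for interior fibers, especially when exceptional points accumulate: one must exclude the possibility that $V$ stays positive as it crosses a fiber boundary. The ``continuous-into-countable-is-constant'' argument dispatches this cleanly and uniformly in the accumulation behaviour of $\Sigma$, but some care is needed to phrase it so that the interior factor $2$ and the boundary factor $1$ are obtained without double counting over the disjoint fibers.
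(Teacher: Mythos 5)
Your proof is correct and follows essentially the same route as the paper's: reduce via Lemma~\ref{L1}(ii) and Lemma~\ref{L2} to a lower bound for $\mathrm{TV}(V)$, identify $\max_{J_x}V$ and $\min_{J_x}V$ with the relaxed limits via Theorem~\ref{T3}, and sum the per-fiber contributions $2\chi(x)\rho_x$ over the disjoint fibers. Your component-of-$\{V>0\}$ argument (and the slightly different countability argument via disjoint nondegenerate fibers, where the paper instead invokes $\mathrm{TV}(V)<\infty$) simply makes explicit the endpoint-vanishing facts that the paper's terse estimate $\mathrm{TV}(V)\geq\sum_i 2\chi_i\rho_i$ takes for granted.
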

\begin{proof}
 If $\#\Sigma$ is more than countable,
 then there is an infinite number of 
intervals $J_{x_i}$ such that $\max V - \min V > c_0$ with some $c_0 > 0$.
This is impossible by Theorem~\ref{T3}, since $\mathrm{TV}(V) < \infty$.
Thus, $\Sigma$ is at most a countable set.

 We write $\Sigma=\{x_i\}^\infty_{i=1}$ and $J_i=J_{x_i}$.
 We set $\rho_i=\max_{J_i}V$.
 The cases devided into two cases whether or not $\overline{J_i}$ contains a boundary point of $J$.
 The total variation is estimated so 
\[
	\mathrm{TV}(V) \geq \sum^\infty_{i=1} 2 \chi_i \rho_i,
\]
where $\chi_i=\chi(x_i)$.
 Thus Theorem \ref{T3} and Lemma \ref{L2} yield the desired result.
\end{proof}
We decompose $J_{i}$ by
\[
	J_{i} := \cl \left(\bigcup^\infty_{j=1} {J_{x_i,j}}\right),
\]
where $V>0$ in an open interval $J_{x_i,j}$ and $V=0$ on $\partial J_{x_i,j}$.
 The union can be finite.
 
 We introduce $\overline{\chi}$ on subsets of $J_{x_i}$ which reflects behavior finer than that of $\chi$ on the boundary.
 We set for $x = x_i \in \Sigma$,
 \[
    \overline{\chi}(J_{x_i,j}) = 
    \begin{cases}
        1 & \mbox{if $J_{x_i,j}\cap \partial I = \emptyset$},\\
        1/2 &\mbox{otherwise}.
    \end{cases}
 \]
 By definition
\[
	\mathrm{TV}(V) \geq \sum^\infty_{i=1}  \sum^\infty_{j=1} 2 \overline{\chi}(J_{x_i,j})\rho_{x_i,j}
\]
with
\[
	\rho_{x_i,j} = \max_{J_{x_i,j}} V.
\]
Similarly to obtain Theorem \ref{T4}, we are able to prove a stronger result.
\begin{theorem} \label{Inff} 
Assume the same hypothesis of Theorem \ref{T3}.
 Then
\[
	\liminf_{k\to\infty} \mathrm{TV}(u^{\eps_k}) \geq 
	\sum_{x \in \Sigma} \sum^\infty_{j=1} 2 \overline{\chi}(J_{x,j})\rho_{x,j}, 
\]
where $\rho_{x,j}$ and $\overline{\chi}$ are determined from $V$ as above.
\end{theorem}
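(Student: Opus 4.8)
The plan is to follow the skeleton of the proof of Theorem~\ref{T4}, but to replace the single crude estimate $\mathrm{TV}(V\vert_{J_i}) \geq 2\chi_i \rho_i$ on each fibre $J_i = J_{x_i}$ by a sum over the finer decomposition into bumps. As in Theorem~\ref{T4}, the whole statement reduces to the lower bound
\[
	\mathrm{TV}(V) \geq \sum_{x \in \Sigma} \sum_{j=1}^\infty 2\overline{\chi}(J_{x,j}) \rho_{x,j},
\]
after which Lemma~\ref{L2} gives $\mathrm{TV}(V) \leq \liminf_{k\to\infty} \mathrm{TV}(U^{\eps_k})$ and Lemma~\ref{L1}(ii) gives $\mathrm{TV}(U^{\eps_k}) = \mathrm{TV}(u^{\eps_k})$, so that combining the three yields the claim. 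First I would record that $\overline{x}$, being the uniform limit of the increasing inverse arc-length maps $x^{\eps_k}$, is nondecreasing; hence each fibre $J_x = \{s \in J \mid \overline{x}(s) = x\}$ is a (possibly degenerate) subinterval of $J$, and distinct fibres are pairwise disjoint. By Theorem~\ref{T4}, only countably many $x$ give a nondegenerate $J_x$, and these are precisely the points of $\Sigma$.

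The heart of the argument is the per-bump variation estimate. Fix $x_i \in \Sigma$ and write the open set $\{s \in J_i \mid V(s) > 0\}$ as the disjoint union of its connected components $J_{x_i,j}$, so that $V > 0$ on $J_{x_i,j}$ and, by continuity of $V$, $V = 0$ at every endpoint of $J_{x_i,j}$ interior to $J$. On a bump whose closure meets no endpoint of $J$, $V$ rises from $0$ to its maximum $\rho_{x_i,j}$ and returns to $0$, so the variation of $V$ over $\overline{J_{x_i,j}}$ is at least $2\rho_{x_i,j}$; on a bump touching $\partial J$, only one such ascent (or descent) is guaranteed, giving at least $\rho_{x_i,j}$. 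These two cases are exactly recorded by the weight $\overline{\chi}(J_{x_i,j}) \in \{1, 1/2\}$, so each bump contributes at least $2\overline{\chi}(J_{x_i,j}) \rho_{x_i,j}$. Since all the bumps $J_{x_i,j}$, ranging over $i$ and $j$, are pairwise disjoint open subintervals of $J$, countable additivity of the total variation measure lets me sum these contributions, producing the displayed lower bound for $\mathrm{TV}(V)$.

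The main obstacle is the bookkeeping at the boundary together with the justification that $V$ genuinely vanishes at the interior endpoints of every bump. The latter is where the standing assumption that the relaxed lower limit is identically zero (equivalently $\min_{J_i} V = 0$) enters: it guarantees that the components of $\{V > 0\}$ exhaust the support of the variation on $J_i$ and that their common boundary values are $0$, so no contribution is lost between consecutive bumps. For the boundary weight I must verify that at most the two bumps containing the endpoints $0$ and $L$ of $J$ can receive the factor $1/2$, and that these correspond precisely to fibres $J_x$ with $x \in \partial I$; here the monotonicity of $\overline{x}$ and the identification of $\overline{x}(0)$ and $\overline{x}(L)$ with the endpoints of $\overline{I}$ are used. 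Once these two points are settled, the transfer to $\liminf_{k\to\infty}\mathrm{TV}(u^{\eps_k})$ via Lemmas~\ref{L2} and~\ref{L1}(ii) is immediate, exactly as in Theorem~\ref{T4}.
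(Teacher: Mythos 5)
Your proposal is correct and follows essentially the same route as the paper: the paper's own (very terse) argument is exactly the reduction to the bound $\mathrm{TV}(V) \geq \sum_{x\in\Sigma}\sum_j 2\overline{\chi}(J_{x,j})\rho_{x,j}$ --- stated there as immediate from the bump decomposition --- followed by the transfer through Lemma~\ref{L2} and Lemma~\ref{L1}(ii), just as in Theorem~\ref{T4}. Your write-up merely makes explicit the details the paper leaves implicit (monotonicity of $\overline{x}$, vanishing of $V$ at interior bump endpoints, disjointness and additivity, and the boundary weight $1/2$ for bumps touching $\partial J$), all consistent with how the estimate is later used in the proof of Theorem~\ref{G1}.
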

%
%
%

%
%
%
%
\section{Proof of convergence of functional and compactness} \label{sec:4} 

We shall prove the characterization of the Gamma limit of the single-well Modica--Mortola functional by the results of the previous section on unfoldings.
\begin{proof}[Proof of Theorem \ref{G1}] \ \\
(i) (liminf inequality)
We discuss the case $M=\overline{I}$.
 We may assume $I=(0,r)$.
 Assume that $v_\eps \graphto \Xi \in \mathcal{B}$ with $v_\eps \in H^1(M)$.
 By the Modica--Mortola inequality which follows from $\alpha^2+\beta^2 \geq 2\alpha\beta$ for numbers we have
\[
	\ESMM{\eps}(v_\eps) 
	\geq \int_M \left| \odfrac{v_\eps}{x} \right| \sqrt{F(v_\eps)} \id x
	= \int_M \left| G(v_\eps)_x \right| \id x.
\]
The right-hand side equals $\mathrm{TV}(u^\eps)$ if one sets $u^\eps=G(v_\eps) {\geq 0}$.
 We may assume that $\ESMM{\eps}(v_\eps)$ is bounded for $\eps \in (0,1)$
 so that $\mathrm{TV}(u^\eps)$ is bounded for $\eps \in (0,1)$
 and that $\int_M F(v_\eps)\id x\to 0$ as $\eps \to 0$.
 By (F2), the latter convergence implies that $v_\eps\to 1$ in measure.
 By taking a subsequence,
 we see that $v_{\eps'}\to 1$ a.e.
 so that $u^{\eps'}\to 0$ a.e.
 This implies that
\[
	\liminfstar_{\eps \to 0} u^\eps(x) = 0
	\quad\text{for all}\quad x \in M.
\]
By taking a subsequence,
we may assume that the inverse function $x^{\eps_k}$ of the arc-length parameter of $u^{\eps_k}$ converges to some $\overline{x}$.
 Applying Theorem~\ref{Inff},
 we see that 
\[
	\liminf_{k \to \infty} \ESMM{\eps_k}(v_{\eps_k})
	\geq \sum_{x\in\Sigma} \sum^\infty_{j=1} 2 \overline{\chi}(J_{x,j})\rho_{x,j},
\]
where $\Sigma$ is the set where $\limsupstar u^{\eps_k} (x)>0$ and $\rho_{x,j}$ is determined by limit $V$ of $u^\eps$.
Note that $\Sigma$ is at most countable.
If $v_\eps \xrightarrow{g} \Xi$,
then $u^\eps 
\xrightarrow{g} \Theta$ and $\Theta (x) = \{0\}$ if $x \notin \Sigma$ and
\[
     \Theta (x_i) = \left[ 0, \max \left(G (\xi^+_i),G (\xi^-_i) \right) 
\right]
     \quad \text{for} \quad x_i \in \Sigma
\]
by Lemma~\ref{SG3}.
By Theorem~\ref{T3}, at least one of $\rho_{x_i,j}$ should be 
equal to $\max\left(G (\xi^+_i),G (\xi^-_i) \right)$.
However, if $\xi^-_i < 1 < \xi^+_i$, then $v_\eps-1$ is 
sign-changing near $x_i$.
In this case, one of $\rho_{x_i,j}$'s must be equal to $\min \left(G 
(\xi^+_i),G (\xi^-_i) \right)$.
Thus we observe that
\[
	\sum^\infty_{j=1} \overline{\chi}(J_{x_i,j})\rho_{x_i,j} \geq
	G(\xi^+_i) + G(\xi^-_i), \quad
	x_i \in \Sigma \cap I.
\]
If $x \in \Sigma \cap \partial I$,
one has to be more careful.
For $x_i \in \Sigma \cap \partial I$, we see that
\begin{equation}\label{eq:bdry}
        \sum^\infty_{j=1} \overline{\chi}(J_{x_i,j})\rho_{x,j} \geq
	\min (G(\xi^+_i),G(\xi^-_i)) + \frac{1}{2}\max(G(\xi^+_i),G(\xi^-_i)).
\end{equation}
Indeed, without loss of generality, we assume that $G(\xi_i^{-}) < G(\xi_i^{+})$.
When $G(\xi_i^-) = 0$,
\eqref{eq:bdry} is rather easy to prove since the right hand side is equal to $\dfrac{1}{2}G(\xi_i^+)$, and then we may assume that $G(\xi_i^-)>0$.
Then there are at least two indices denoted by $j =1,2$, such that
\[
    \overline{\chi}(J_{x_i,1}) = \frac{1}{2},\ 
    \overline{\chi}(J_{x_i,2}) = 1,\ 
    \mbox{and}\ \{\rho_{x_i,1},\rho_{x_i,2}\} = \{G(\xi_i^-),G(\xi_i^+)\}.
\]
The left hand side is dominated from below by
\[
  \overline{\chi}(J_{x_i,1})\rho_{x_i,1} + \overline{\chi}(J_{x_i,2})\rho_{x_i,2}
  = \frac{1}{2}\rho_{x_i,1} + \rho_{x_i,2}.
\]
The right hand side is minimized in the case that $\rho_{x_i,1} = G(\xi_i^+)$ and $\rho_{x_i,2} = G(\xi_i^-)$.
We thus obtain the inequality~\eqref{eq:bdry}.

we now conclude that
\[
\begin{aligned}
    \liminf_{k \to \infty} \ESMM{\eps_k}(v_{\eps_k}) &\geq \sum_{i=1}^{\infty} 2(1-\kappa_i)(G(\xi^+_i)+G(\xi^-_i)) \\
    &\phantom{\geq} +\kappa_i \left[
        2 \min (G(\xi^+_i),G(\xi^-_i)) 
        + \max (G(\xi^+_i),G(\xi^-_i)) 
    \right] \\
    &= \ESMM{0}(\Xi,\overline{I}),
\end{aligned}
\]
{which is} the desired liminf inequality for $b=0$.
 Since $v_\eps \graphto \Xi$,
 we see that
\[
	\liminf b \left(v_\eps(a)\right)^2 \geq b \left(\min \Xi(a)\right)^2.
\]
Thus the desired liminf inequality follows for $b>0$.
 The case $M=\mathbf{T}$ is easier since there is no boundary point.\\[1em]
(ii) (limsup inequality)
 This follows from explicit construction of function $w_\eps$ as for the standard double-well Modica--Mortola functional.
 For $\xi \neq 1$ ($\xi\in\mathbf{R}$) and $x>0$, let $v(x,\xi)$ be a function determined by
\[
	\left| \int^v_\xi \left( \frac{1}{\sqrt{F(\rho)}} \right) d\rho \right| = x.
\]
This equation is uniquely solvable by (F1) for all $x \in [0,x_*)$ with
\[
	x_* := \left| \int^1_\xi \left( \frac{1}{\sqrt{F(\rho)}} \right) d\rho \right|.
\]
Note that $v$ solves the initial value problem
\begin{eqnarray*}
\left\{
\begin{array}{l}
	\displaystyle\odfrac{v}{x} = \sqrt{F(v)}, \quad x \in (0,x_*) \\[1em]
	v(0,\xi) = \xi
\end{array}
\right.
\end{eqnarray*}
although this problem may admit many solutions.
 We also note that $v$ is monotone and that
\[
	\lim_{x \to x_*} v(x,\xi) = 1
\]
including the case $x_*=\infty$.
 We consider the even extention of $v$ and still denote by $v$, that is, $v(x,\xi)=v(-x,\xi)$ for $x \in (-x_*,0]$.
 We next translate and rescale $v$.
 Let $v_\eps$ be of the form
\[
	v_\eps(x,z,\xi) := v \left( \frac{x-z}{\eps}, \xi \right),
	\quad x \in \mathbf{R}.
\]
By the equality case of the Modica--Mortola functional, we see that
\[
	\ESMM{\eps} (v_\eps) = \int_M \left|\odfrac{v_\eps}{x}\right| \sqrt{F(v_\eps)} \id x
	=  \int_M \left| G(v_\eps)_x \right| \id x.
\]
The right-hand side is estimated from above by
\[
	2 \left( G(\xi)- G(1) \right) = 2G(\xi)
\]
and if $z$ is a boundary point of $M$, we may replace $2G(\xi)$ by $G(\xi)$.

In order to explain the the main idea of the proof, we first study the case when all $\xi_i^+=1$
although logically we need not distinguish this case from general case.
If all $\xi^+_i = 1$, then it is easy to construct the desired $w_\eps$ by setting
\[
	w_\eps(x) = \min_{x_i\in\Sigma} v(x, x_i, \xi^-_i).
\]
Indeed, we still have
\[
	\ESMM{\eps} (w_\eps) = \int_M \left|G(w_\eps)_x\right| \id x
\]
and evidently this total variation is dominated from above by
\[
	\sum^\infty_{i=1} 2 \chi_i G (\xi^-_i).
\]
(The first identity can be proved by approximating $w_\eps$ by 
minimum of finitely many $w_\eps$'s.)
We thus observe that $\ESMM{\eps} (w_\eps)  \leq \ESMM{0}(\Xi, \mathbf{T})$ for all $\eps>0$.
 The graph {convergence} $w_\eps \xrightarrow{g} \Xi$ is rather clear since
\[
	w_\eps(x,0, \xi) \xrightarrow{g} \Xi_0
\]
on any bounded closed interval as $\eps \to 0$, where
\begin{equation*}
	\Xi_0(x) = \left \{
	\begin{array}{ll}
	1 & x \neq 0 \\
	\left[\xi,1\right] & x=0.
\end{array}
	\right.
\end{equation*}

The proof for general $\xi^\pm_i$ is more involved.
 For $\delta > 0$, we cut off $v$ by setting as follows:
 For $\xi < 1$,
\begin{equation*}
	v^\delta(x, \xi) = \left \{
	\begin{array}{ll}
	v(x, \xi) & \text{if}\quad v(x)\leq 1-\delta\beta,\ \beta=|\xi-1| \\
	\left(|x|+c\right)\wedge 1 & \text{if}\quad v(x)\geq 1-\delta\beta,
\end{array}
	\right.
\end{equation*}
and for $\xi > 1$,
\begin{equation*}
	v^\delta(x, \xi) = \left \{
	\begin{array}{ll}
	v(x, \xi) & \text{if}\ v(x)\geq 1+\delta\beta,\ \beta=|\xi-1| \\
	\left(-|x|+c'\right)\vee 1 & \text{if}\ v(x)\leq 1+\delta\beta,
\end{array}
	\right.
\end{equation*}
where constants $c,c'$ are taken so that $v^\delta$ is (Lipschitz) continuous. {(See Figure~\ref{fig:v}.)}
 We rescale and translate this $v^\delta$ and set
\[
	v^\delta_\eps(x, z, \xi) := v^\delta \left(\frac{x-z}{\eps}, \xi \right)
	\quad x \in \mathbf{R}.
\]
We consider the case when $\xi < 1$. Since $\displaystyle \odfrac{v^\delta_\eps}{x} = \frac{1}{\eps}\sqrt{F(v^\delta_\eps)}$ for $v^\delta_\eps \leq 1-\delta$, we see that for $z \in M$
\begin{equation}  \label{MME}
\begin{aligned}
	\ESMM{\eps}(v^\delta_\eps)
	& = \frac{\eps}{2} \int_M \left|  \odfrac{v^\delta_\eps}{x} \right|^2 \id x + \frac{1}{2\eps} \int_M F(v^\delta_\eps)\id x \\
	& \leq \int_{v_\eps(x)<1-\delta} \left| G(v_\eps)_x \right| \id x  \\ 
	&\phantom{=} + 2 \left\{ \frac{\eps}{2} \left( \frac{1}{\eps} \right)^2 \cdot \delta\beta\eps + \frac{1}{2\eps} \max \Bigl\{ F(\rho) \Bigm| 1-\delta \leq \rho \leq 1 \Bigr\} \delta\beta\eps \right\} \\
	& \leq 2G(\xi) + 2\beta\delta 
\end{aligned}
\end{equation}
for sufficiently small $\delta$, say $\delta<\delta_F$, since $F(\rho)\to 0$ as $\rho\to 1$.
 This $\delta_F$ depends only on $F$.
 A similar argument for $\xi > 1$ yield the same estimate~\eqref{MME}.
 
 We first consider the case when $M=\mathbf{T}$.
 Let $\eta=\eta(\eps, \delta, \xi)$ be a number such that $\operatorname{supp} \left(v^\delta_\eps-1\right) = [z-\eta, z+\eta]$.
 For $x_i \in \Sigma$, we set
\begin{equation*}
	v^\delta_{\eps, i}(x) := \left \{
	\begin{array}{ll}
	v^\delta_\eps(x, x_i, \xi^-_i) & \text{if}\quad x \in (x_i - \eta^-_i, x_i + \eta^-_i) \\
	v^\delta_\eps(x, x_i + \eta^+_i + \eta^-_i, \xi^+_i) & \text{otherwise,}
\end{array}
	\right.
\end{equation*}
where $\eta^-_i = \eta(\eps, \delta, \eta^-_i)$ and $\eta^+_i = \eta(\eps, \delta, \eta^+_i)$. (see Figure~\ref{fig:v}.)
\begin{figure}[tbp]
    \centering
    \includegraphics{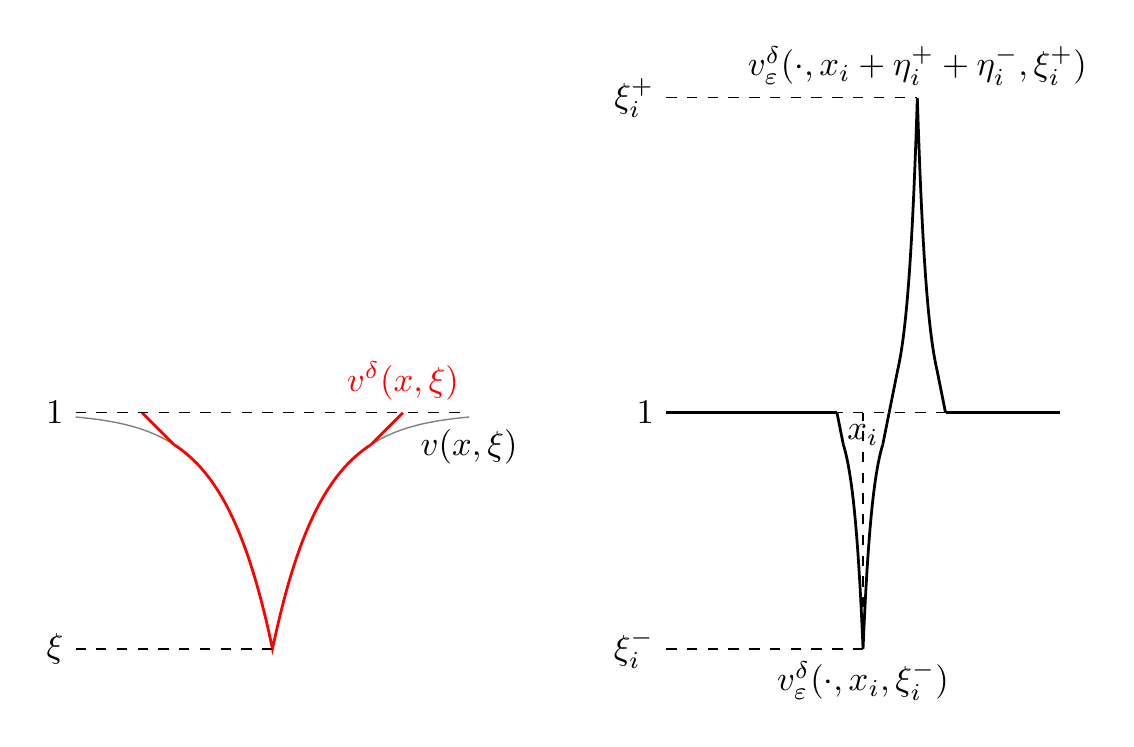}
    \caption{(left) The construction of $v^\delta(\cdot,\xi)$. In order to ensure the finiteness of the support, we take the cutoff by affine functions.
    (right) The construction of $v^\delta_{\eps,i}$ for general $\xi_{i}^{\pm}$. It is constructed by combining $v^\delta_\eps(\cdot,x_i,\xi_i^-)$ and $v^\delta_\eps(\cdot,x_i,\xi_i^+)$ with shift in order that their supports touch at their endpoints.}
    \label{fig:v}
\end{figure}
 This function is (Lipschitz) continuous and is strictly monotone from $x_i-\eta^+_i-\eta^-_i$ to $x_i$.
 For $v^\delta_{\eps,i}(x)$ by \eqref{MME}, we see that 
\begin{equation} \label{MME2}
	\ESMM{\eps} \left(v^\delta_{\eps, i}\right)
	\leq 2 \left( G(\xi^+_i) + G(\xi^-_i) \right) + 4 \beta_i \delta,
\end{equation}
where $\beta_i = \max\left( |\xi^+_i -1|, |\xi^-_i -1| \right)$.

Our goal is to construct $w_\eps$ such that $w_\eps \xrightarrow{g} \Xi$ and for each $\mu>0$, there is $\eps_\mu>0$ such that if $\eps<\eps_\mu$ then
\begin{equation} \label{eq:G1}
	\ESMM{\eps}(w_\eps) \leq \ESMM{0} (\Xi, \mathbf{T}) + \mu.
\end{equation}
We order $x_i \in \Sigma$ so that $\beta_i$ is decreasing.
{We note that $\{\beta_i\}$} must converge to zero because $\sum^\infty_{i=1} \left( G(\xi^+_i) + G(\xi^-_i) \right)<\infty$.
 For each $v^\delta_{\eps, i}$, we set $\delta = \delta_i = \delta_i(\mu)$ such that $\sum^\infty_{i=1} 4 \beta_i \delta_i < \mu$; this is, of course, possible for example by taking $\delta_i = 2^{-i-2}\mu$.
 Let $j(\mu,\eps)>0$ be the maximum number such that the support of $\left\{ v^{\delta_i}_{\eps, i}-1\right\}^{j(\mu,\eps)}_{i=1}$ is mutually disjoint.
 We set
\[
	w^\mu_\eps(x) := 1 + \sum^{j(\mu,\eps)}_{i=1} \left(v^{\delta_i(\mu)}_{\eps, i}(x) - 1\right)
\]
and observe by \eqref{MME2} that
\begin{align}  \label{eq:G2}
	\ESMM{\eps}(w^\mu_\eps)
	& \leq \sum^{j(\mu,\eps)}_{i=1} 2\left( G(\xi^+_i) + G(\xi^-_i) \right) + \sum^{j(\mu,\eps)}_{i=1} 4 \beta_i \delta_i \\ \nonumber
	& \leq \ESMM{0} (\Xi, \mathbf{T}) + \mu \quad \text{for all} \quad \eps>0.
\end{align}
Since $j(\mu,\eps)\to\infty$ as $\eps \to 0$, we see that $w^\mu_\eps \xrightarrow{g} \Xi$ as $\eps\to 0$ for each $\mu>0$.
 The desired $w_\eps$ is obtained as a kind of diagonal argument.
 Indeed, for a given $\nu>0$, we take $\eps=\eps(\nu, \mu)$ such that
\[
	d_H \left(\Gamma_{w_\eps^\mu}, \Xi \right) < \nu
\]
for $\eps \in \left(0, \eps(\nu, \mu)\right)$.
 We may assume that $\eps(\nu, \mu)$ is monotone in $\nu$ and $\mu$,
 that is,
 $\eps(\nu_2,\mu_2) \le \eps(\nu_1,\mu_1)$
 if $\nu_1 {\ge} \nu_2$ and $\mu_1 {\ge} \mu_2$.
 We then set
\[
	w_\eps := w^{\mu_\ell}_\eps \quad \text{for} \quad \eps \in \left[\eps(\nu_{\ell+1}, \mu_{\ell+1}), \eps(\nu_\ell, \mu_\ell)\right),
\] 
where $\nu_\ell,\ \mu_\ell \downarrow 0$ as $\ell\to\infty$.
 We now observe that $w_\eps \xrightarrow{g} \Xi$ and by \eqref{eq:G1} the desired estimate \eqref{eq:G1} holds for $\eps_\mu = \eps(\nu_\ell, \mu_\ell)$ for $\mu_\ell<\mu$.

We thus proved the limsup inequality for $\ESMM{\eps}$ for $M=\mathbf{T}$.
 If $b>0$, we may assume that $\xi^-_i = \min \Xi(a)<1$.
 It is easy to see that $w_\eps(a) = \xi^i_-$ for all $\eps>0$ by construction.
 Thus the limsup inequality for $\ESMMb{\eps}{b}$ for $b>0$ is obtained.
 
It remains to handle the case for $M=\overline{I}$.
 Assume that $x_1 \in \Sigma$ is the right end point of $\overline{I}$.
 We first consider the case when $G(\xi^-_1) \leq G(\xi^+_1)$.
 Instead of \eqref{MME}, we have
\[
	\ESMM{\eps} \left(v^\delta_{\eps, 1}\right) \leq 2 G(\xi^+_1) + G(\xi^-_1) + 3 \beta_1 \delta.
\] 
If there is no other point of $\Sigma$ on $\partial I$, arguing in the same way we obtain the desired limsup inequality by the same construction of $w_\eps$.
 If $G(\xi^-_1) > G(\xi^+_1)$, then we modify the definition of $v^\delta_{\eps, 1}$ by
\begin{equation*}
	\overline{v}^\delta_{\eps, 1}(x) := \left \{
	\begin{array}{ll}
	v^\delta_\eps(x, x_1, \xi^+_i) & \text{of}\quad x \in (x_1- \eta_i, x_1] \\
	v^\delta_\eps(x, x_1 + \eta^+_1 + \eta^-_1, \xi^-_i) & \text{otherwise.}
\end{array}
	\right.
\end{equation*}
The remaining argument is similar.
 Symmetric argument yields the limsup inequality in the case that $\Sigma$ has the left end point of $\overline{I}$.
\end{proof}
We next prove the compactness.
\begin{proof}[Proof of Theorem \ref{C1}]
As in the proof of Theorem \ref{G1} (i), we see that
\[
	\sup_j \int_M \left|G\left( v_{\eps_j} \right)_x\right| < \infty.
\]
By (F2), we see that
\[
	G(v) \leq \sqrt{F(v)} |v-1| \leq \frac{F(v)}{2} + \frac{(v-1)^2}{2}, \quad
	v \in \mathbf{R}.
\]
By (F2'), we see
\[
	F(v) \geq c_0 (v-1)^2 -c'_1
\]
so
\[
	G(v) \leq C' F(v)
\]
for $v$ such that $|v-1|$ is sufficiently large $v$ with some content $C'>0$.
 Since
\[
	\frac{1}{\eps_j} \int_M F \left(v_{\eps_j} \right)
\]
is bounded, so is $\int_M G\left( v_{\eps_j} \right)$.
 We set $u^{\eps_j}=G\left( v_{\eps_j} \right)$ and observe that $\mathrm{TV}\left( u^{\eps_j} \right)$ is bounded and $\left\| u^{\eps_j} \right\|_{L^1}$ is bounded.
 Since
\[
	\left\| f - f_{\mathrm{av}} \right\|_\infty \leq \| f_x \|_{L^1},
\]
where $f_{\mathrm{av}}$ is the average of $f$ over $I$, it follows that
\[
	 \| f \|_\infty \leq \| f_x \|_{L^1} + \| f_x \|_{L^1}/|I|.
\]
This interpolation inequality yields a bound for $\left\| u^{\eps_j} \right\|_\infty$.
 Applying Lemma \ref{L2}, there is a subsequence $U^{\eps_k}$ converges to $V$ uniformly, {where $U^{\eps_k}$ is the unfolding of $u^{\eps_k}$}.
 Since we may assume that $x^{\eps_k}$, {the inverse of arc-length of $u^{\eps_k}$,} converges to $\overline{x}$ uniformly in $M$ by taking a subsequence,
 applying Theorem~\ref{T4} yields that
\[
	\limsupstar_{k\to\infty} u^{\eps_k} (x) 
	> \liminfstar_{k\to\infty} u^{\eps_k} (x), \quad
	x \in \Sigma 
\]
at most a countable set $\Sigma$.
 Since $v_{\eps_k}\to 1$ a.e.\ by taking a subsequence,
 we see that $\liminfstar u^{\eps_k} (x)=0$ for all $x \in M$.
 This implies that ${v_{\eps_k}}$ satisfies all  assumptions on a sequence $\{g_j\}$ of the compactness lemma (Lemma~\ref{SC}) with $S=\Sigma$.
 Then by Lemma~\ref{SC}, we conclude that $v_{\eps_k} \graphto \Xi$ with some $\Xi \in \mathcal{A}_0$.
\end{proof}

\section{Singular limit of the Kobayashi--Warren--Carter energy} \label{sec:5} 

In this section, we shall study the Gamma limit of the Kobayashi--Warren--Carter energy.

We first derive an inequality for lower semicontinuity.
 Assume that $M$ is either $\overline{I}$ or $\mathbf{T}$.
 Assume that
\begin{enumerate}
\item[(C1)] $v_\eps\graphto\Xi$, $u_\eps \to u$ in $L^1(\ringM)$ as $\eps \to 0$, where $v_\eps \in C(M)$, $u_\eps \in L^1$.
\end{enumerate}

For the limits, we assume that
\begin{enumerate}
\item[(C2)] $\Xi\in\mathcal{A}_0$, that is, there is a countable set $\Sigma = \{x_i\}^\infty_{i=1} \subset M$ such that $\Xi(x)=\{1\}$ for $x \notin \Sigma$ and $\Sigma(x_i)=\left[ \xi^-_i, \xi^+_i \right] \ni 1$ with $\xi^-_i < \xi^+_i$ for  $x_i \in \Sigma$.
 Moreover, $\sum^\infty_{i=1} G\left(\xi^-_i\right) < \infty$.

\item[(C3)] $u \in BV\bigl(\ringM\setminus\Sigma_0\bigr)$, where $\Sigma_0 = \left\{ x_i \in \Sigma \bigm| \xi^-_i =0 \right\}$.
 (Since $\sum^\infty_{i=1} G\left( \xi^-_i \right)<\infty$, the set $\Sigma_0$ is a finite set.)
\end{enumerate}
We define a weighted total variation
\[
	\int_{\ringM} v^2_\eps \left| \odfrac{u_\eps}{x} \right| :=
	\sup \left\{ \int_{\ringM} \varphi_x u_\eps \id x \Bigm| 
	\left| \varphi(x) \right| \leq v^2_\eps(x),\ 
	\varphi \in C^1_c (\ringM) \right\},
\]
where $C^1_c (M)$ is the space of all $C^1$ functions in $\ringM$ with compact support in $\ringM$.
 For $u \in BV\bigl(\ringM\setminus\Sigma_0\bigr)$, let $J_u$ denote the set of jump discontinuities of $u$.
 In other words,
\[
	\textstyle
	J_u = \left\{ x \in \ringM\setminus \Sigma_0 \bigm| 
	d(x) = \left| u(x+0) - u(x-0) \right| >0 \right\},
\]
where $u(x \pm 0)$ is the trace from right ($+$) and left ($-$).
It is at most a countable set.
\begin{lemma} \label{Lemma6} 
Assume that (C1) -- (C3).
 Then
\[
	\int_{\ringM\setminus\left(J_u\cap\Sigma\right)} |u_x|
	+ \sum_{x \in \Sigma'} d_i \left| \xi^-_i \right|^2
	\leq \liminf_{\eps \to 0} \int_{\ringM} v^2_\eps \abs{\odfrac{u_\eps}{x}},
\]
where $d_i = d(x_i) \geq 0$, $\Sigma'=\Sigma\setminus\Sigma_0$.
\end{lemma}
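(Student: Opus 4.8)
The plan is to establish the inequality by splitting the weighted total variation into a contribution away from the exceptional set $\Sigma$, where the weight $v_\eps^2$ tends to $1$ uniformly and one recovers the ordinary total variation of $u$, and a contribution localized near each exceptional point $x_i$, where the weight collapses to $(\xi_i^-)^2$ and multiplies the jump $d_i$. First I would fix the finite set $\Sigma_0$ of points where $\xi_i^- = 0$ and work on $\ringM \setminus \Sigma_0$, where $u \in BV$ by (C3). The key tool is the dual definition of the weighted total variation: given a test function $\varphi \in C^1_c(\ringM)$ with $\abs{\varphi} \le v_\eps^2$, one has $\int_{\ringM} \varphi_x u_\eps \id x \le \int_{\ringM} v_\eps^2 \abs{\odfrac{u_\eps}{x}}$, and the strategy is to construct, for each target configuration, a good family of test functions whose limiting effect reproduces the left-hand side.

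The main step is the localization near a fixed exceptional point $x_i \in \Sigma' = \Sigma \setminus \Sigma_0$. On a small punctured neighborhood of $x_i$ the graph convergence $v_\eps \graphto \Xi$ forces $v_\eps \to 1$ away from $x_i$, while $\min\Xi(x_i) = \xi_i^-$ means that near $x_i$ the infimum of $v_\eps^2$ dips down to roughly $(\xi_i^-)^2$; I would make this precise using Theorem~\ref{T3}, which identifies $\liminfstar v_\eps(x_i) = \xi_i^-$ through the unfolding. I would choose test functions $\varphi$ supported in a shrinking neighborhood of $x_i$ that straddle the jump of $u$ at $x_i$ (so that $\int \varphi_x u \approx \varphi(x_i) \cdot d_i$ up to the trace values $u(x_i \pm 0)$), with $\abs{\varphi}$ pushed up as close as the constraint $\abs{\varphi} \le v_\eps^2$ permits near $x_i$. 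Passing to the $\liminf$, the admissible height of $\varphi$ at $x_i$ is governed by $\liminf_{\eps\to 0}\min v_\eps^2 = (\xi_i^-)^2$, which yields the contribution $d_i (\xi_i^-)^2$. Summing over the countably many $x_i$ requires care: I would use disjoint neighborhoods and sum the localized test functions, invoking $\sum_i G(\xi_i^-) < \infty$ together with the $BV$ bound on $u$ to guarantee that the total construction remains admissible and that the series $\sum_i d_i (\xi_i^-)^2$ is summable.

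Away from $\Sigma$, on any open set $\Omega$ with $\overline{\Omega} \subset \ringM \setminus \Sigma$, I would use that $v_\eps \to 1$ uniformly on $\overline{\Omega}$ (again a consequence of the graph convergence and compactness of $\overline{\Omega}$), so for any $\varphi \in C^1_c(\Omega)$ with $\abs{\varphi} \le 1$ one has $\abs{\varphi} \le v_\eps^2 + o(1)$, and $\int \varphi_x u_\eps \id x \to \int \varphi_x u \id x$ by the $L^1$ convergence $u_\eps \to u$. Taking the supremum over such $\varphi$ recovers $\int_\Omega \abs{u_x}$ in the limit, and exhausting $\ringM \setminus \Sigma$ by such $\Omega$ gives $\int_{\ringM \setminus \Sigma}\abs{u_x}$; since $\abs{u_x}$ is a continuous measure off $J_u$ one may replace $\ringM \setminus \Sigma$ by $\ringM \setminus (J_u \cap \Sigma)$ as noted in the text. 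The hardest part will be combining the away-from-$\Sigma$ part and the localized-at-$\Sigma$ parts into a single admissible test function so that their contributions add rather than compete: because the neighborhoods of the $x_i$ and the set $\Omega$ are disjoint and the test functions have disjoint supports, their sum is still an admissible competitor for the weighted total variation, so $\int \varphi_x u_\eps$ splits as a sum and the $\liminf$ of the sum dominates the sum of the separate limits by superadditivity of $\liminf$. I expect the technical obstacle to lie in controlling the summation over the infinitely many exceptional points simultaneously and in justifying the exact limiting height $(\xi_i^-)^2$ of the weight via the unfolding characterization of $\liminfstar v_\eps$.
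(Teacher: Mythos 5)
Your overall strategy (lower-bound the weight near and away from the exceptional set, recover $d_i(\xi^-_i)^2$ at each $x_i$, combine by superadditivity of $\liminf$) is in the right spirit, but it has a genuine gap exactly at the point you yourself flag as delicate: the simultaneous treatment of the countably many exceptional points. Your ``away'' part is built on open sets $\Omega$ with $\overline{\Omega} \subset \ringM \setminus \Sigma$ and on pairwise disjoint neighborhoods of all the $x_i$. Neither object exists in general: $\Sigma$ is only assumed countable, so it may be dense in $M$ (e.g.\ all rationals, with $\xi^-_i \to 1$ fast enough that $\sum_i G(\xi^-_i) < \infty$), in which case $\ringM\setminus\Sigma$ has empty interior, every nonempty open $\Omega$ meets $\Sigma$, and no family of pairwise disjoint neighborhoods of all the $x_i$ can exist. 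Even when $\Sigma$ is not dense, the argument loses mass: take $\Sigma = \{1/n\}_{n\geq 1}$ and let $u$ jump at the accumulation point $0 \notin \Sigma$. Your exhaustion of $\ringM \setminus \Sigma$ by compactly contained open sets never sees the atom of $\abs{u_x}$ at $0$, and your localized test functions sit only at points of $\Sigma$; the jump $d(0)$, which the lemma must recover with full weight $1$, is simply lost. Note that lower semicontinuity of the total variation under $L^1$ convergence holds on open sets, not on arbitrary Borel sets such as $\ringM\setminus\Sigma$, so this cannot be repaired by inner regularity of the measure $\abs{u_x}$.

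The paper's proof avoids this with a truncation in depth rather than a decomposition by points: by (C2), for each $\delta>0$ the set $\Sigma_{1-\delta}=\{x_i \in \Sigma \mid \xi^-_i < 1-\delta\}$ is \emph{finite}, since $G(\xi^-_i)$ is bounded away from $0$ for such $i$ and $\sum_i G(\xi^-_i)<\infty$. One takes $\delta_1$-neighborhoods $B_{\delta_1}(x_i)$ of only these finitely many points; graph convergence gives the pointwise bounds $v_\eps \geq \xi^-_i-\delta_1$ on $B_{\delta_1}(x_i)$ and, crucially, $v_\eps \geq 1-\delta-\delta_1$ on the open set $\ringM\setminus\overline{X}_{\delta_1}$ (shallow exceptional points may well lie in this set, but they only dip down to about $1-\delta$). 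Standard lower semicontinuity of the total variation on these finitely many open sets then yields the estimate with constants $(1-\delta-\delta_1)^2$ and $(\xi^-_i-\delta_1)^2$, and one concludes by letting $\delta_1\to 0$ and then $\delta\to 0$; in particular, jumps of $u$ at accumulation points of $\Sigma$ are captured with weight $(1-\delta)^2 \to 1$. If you want to keep your test-function formulation, this same truncation is what you need: treat exactly the finitely many deep points, and accept the weight $(1-\delta)^2$ everywhere else. Also, your appeal to Theorem~\ref{T3} to identify the admissible height at $x_i$ is unnecessary: the bound $v_\eps \geq \xi^-_i - \delta_1$ on $B_{\delta_1}(x_i)$ for small $\eps$ follows directly from the definition of graph convergence.
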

\begin{proof}
It suffices to prove that for any $\delta\in(0,1)$,
\[
	(1-\delta)^2 \int_{\ringM\setminus\left(J_u\cap\Sigma_{1-\delta}\right)} |u_x|
	+ \sum_{x_i \in \Sigma'_{1-\delta}} d_i \left| \xi^-_i \right|^2
	\leq \liminf_{\eps \to 0} \int_{\ringM} v^2_\eps \abs{\odfrac{u_\eps}{x}},
\]
where
\[
	\textstyle
	\Sigma_{1-\delta} = \left\{ x_i \in \Sigma \bigm| \xi^-_i < 1-\delta \right\}, \quad
	\Sigma'_{1-\delta} = \Sigma_{1-\delta} \setminus \Sigma_0.
\]
By this notation $\Sigma_1=\Sigma$.
 Note that the set $\Sigma_{1-\delta}$ is a finite set for $\delta>0$ since $\sum^\infty_{i=1} G\left(\xi^-_i\right)<\infty$.
 
Since $\Sigma_0$ is a finite set and 
\[
	\int_{\ringM} v^2_\eps \abs{\odfrac{u_\eps}{x}}
	\geq \int_{\ringM\setminus\Sigma_0} v^2_\eps \abs{\odfrac{u_\eps}{x}},
\]
it suffices to prove that for each interval $\{M_j\}^m_{j=1}$, which is a connected component of $\ringM\setminus\Sigma_0$ the inequality
\[
	(1-\delta)^2 \int_{M_j\setminus\left(J_u\cap\Sigma_{1-\delta}\right)} |u_x|
	+ \sum_{\substack{x_i \in \Sigma'_{1-\delta}\\ x_i \in M_j}} d_i \left| \xi^-_i \right|^2
	\leq \liminf_{\eps \to 0} \int_{M_j} v^2_\eps \abs{\odfrac{u_\eps}{x}}.
\]
Thus we may assume that $\Sigma_0=\emptyset$.

We consider $\delta_1$-open neighborhood of $\Sigma_{1-\delta}$, that is,
\[
	\textstyle
	X_{\delta_1} = \left\{ x \in \ringM \bigm| \dist\left(x,\Sigma_{1-\delta}\right) < \delta_1 \right\}
\]
and observe that
\[
	\int_{\ringM} v^2_\eps \left|\odfrac{u_\eps}{x}\right|
	\geq \int_{{\ringM} \setminus\overline{X}_{\delta_1}} v^2_\eps \left|\odfrac{u_\eps}{x}\right|
	+ \int_{X_{\delta_1}} v^2_\eps \left|\odfrac{u_\eps}{x}\right|.
\]
We may assume that $X_{\delta_1}$ consist of disjoint interval $B_{\delta_1}(x_i) = \left\{ x \in \ringM \bigm| |x-x_i| < \delta_1\right\}$, $x_i \in \Sigma_{1-\delta}$ by taking $\delta_1$ small.
 Since $v_\eps\graphto\Xi$, for sufficiently small $\eps$ we observe that
\begin{align*}
	v_\eps &\geq 1-\delta-\delta_1 &\text{in}\quad &\ringM\setminus \overline{X}_{\delta_1}, \\
	v_\eps &\geq \xi^-_i - \delta_1 &\text{in}\quad &B_{\delta_1}(x_i),\ \textstyle x_i \in \Sigma_{1-\delta}.
\end{align*}
We thus conclude that
\begin{align*}
	&\liminf_{\eps\to 0} \int_{\ringM} v_\eps^2 \abs{\odfrac{u_\eps}{x}} \\
	&\geq \liminf_{\eps\to 0} \left\{ (1-\delta-\delta_1)^2 \int_{\ringM\setminus\overline{X}_{\delta_1}} \abs{\odfrac{u_\eps}{x}}
	+ \sum_{x_i \in \Sigma_{1-\delta}} \left(\xi^-_i - \delta_1\right)^2 \int_{B_{\delta_1}(x_i)} \abs{\odfrac{u_\eps}{x}} \right\} \\
	&\geq (1-\delta-\delta_1)^2 \int_{\ringM\setminus\overline{X}_{\delta_1}} |u_x|
	+ \sum_{x_i \in \Sigma_{1-\delta}} \left(\xi^-_i - \delta_1\right)^2 \int_{B_{\delta_1}(x_i)} |u_x|
\end{align*}
by lower semicontinuity of $\mathrm{TV}(\cdot)$ with respect to $L^1$-convergence.
 The second term of the right-hand side is estimated from below by
\[
	\sum_{x_i \in \Sigma_{1-\delta}} \left(\xi^-_i - \delta_1\right)^2 d_i.
\]
Note that $\delta_2 < \delta_1$ implies $\ringM \backslash \overline{X}_{\delta_1} \subset \ringM \backslash \overline{X}_{\delta_2}$. 
Sending $\delta_1\to 0$ yields
\[
	(1-\delta)^2 \int_{\ringM\setminus\Sigma_{1-\delta}} \abs{u_x}
	+ \sum_{x_i \in \Sigma_{1-\delta}} d_i \abs{\xi_i^-}^2
	\leq \liminf_{\eps\to 0} \int_{\ringM} v^2_\eps \abs{\odfrac{u_\eps}{x}}.
\]
Replacement of $\ringM\setminus\Sigma_{1-\delta}$ by $\ringM\setminus\left( J_u\cap\Sigma_{1-\delta} \right)$ is rather trivial because outside $J_u$ the set $\Sigma_{1-\delta}$ has measure zero with respect to the measure $\abs{u_x}$. 
\end{proof}
We are now in position to give a proof for the Gamma limit of the Kobayashi--Warren--Carter energy.
\begin{proof}[Proof of Theorem~\ref{G2}]
	\vspace{-1em}
\begin{enumerate}[leftmargin=2em,topsep=0em]
\item[(i)] (liminf inequality)
 We may assume that
 \[
 \liminf_{\eps \to 0} \EKWC{\eps} (u_\eps,v_\eps) < \infty.
 \]
 By Theorem~\ref{G1}(i), we see that the limit $\Xi$ satisfies (C2). 
 Let $\Omega$ be an open set such that $\overline{\Omega}$ is compact and contained in $\ringM\setminus\Sigma_0$.
 Assume that
\[
	\int_{\ringM} v^2_\eps \abs{\odfrac{u_\eps}{x}}
\]
is bounded.
 Since $c:=\min_{x \in \Sigma'} \Xi(x) > 0$ and $v_\eps\xrightarrow{g} \Xi$, we set that $v_\eps \geq c/2 > 0$ on $\overline{\Omega}$ for sufficiently small $\eps > 0$.
 Thus $\displaystyle\int_\Omega \abs{\odfrac{u_\eps}{x}}$ is bounded.
 This implies that the limit $u \in BV(\Omega)$.
 We now conclude that $u$ satisfies (C3). 
 
Applying Theorem~\ref{G1} for $\ESMM{\eps}$ and Lemma~\ref{Lemma6} for $\displaystyle \int v^2_\eps \abs{\odfrac{u_{\eps}}{x}}$, we see that
\[
	\sigma \int_{\ringM\setminus(J_{u}\cap\Sigma)} |u_x| + \sigma \sum_{x \in\Sigma'} d_i |\xi^-_i|^2
	+ \ESMM{0}(\Xi, M) \leq \liminf_{\eps\to 0} \EKWC{\eps}(u_\eps, v_\eps).
\]
 The second term in the left-hand side equals $\sigma \sum_{x \in \Sigma} d_i |\xi^-_i|^2$ since $\xi^-_i = 0$ on $\Sigma_0$.
 Thus the left-hand side equals $\EKWC{0}(u, \Xi, M)$.
 The proof of liminf inequality is now complete. 
\item[(ii)] (limsup inequality)
 We take $u_\eps = u$.
 We notice that Theorem~\ref{G1} extends to the case when $\ESMMb{0}{b}(\Xi, M)$, $\ESMMb{\eps}{b}(v)$ are replaced by
\begin{align*}
	\ESMMb{0}{\{b_\ell\}}(\Xi, M) &:= \ESMM{0}(\Xi, M) + \sum^\infty_{\ell=1} b_\ell \left(\min\Xi(a_\ell)\right)^2, \\
	\ESMMb{\eps}{\{b_\ell\}}(v_\eps) &:= \ESMM{\eps}(v_\eps) + \sum^\infty_{\ell=1} b_\ell \left(v_\eps(a_\ell)\right)^2,
\end{align*}
where we assume that $\sum^\infty_{\ell=1}b_\ell<\infty$ with $b_\ell \geq 0$ and $a_\ell \in \ringM$ for $\ell=1, 2, \ldots, m$.
 Let $\{a_\ell\}$ denote the jump discontinuity of $u$, that is, $J_u=\{a_\ell\}$.
 Let $b_\ell$ denote $\sigma$ times the jump $d_\ell = \left| u(a_\ell+0) - u(a_\ell-0)\right|$, that is, $b_\ell=\sigma d_\ell$.
 Note that $\sum b_\ell<\infty$.
 By Theorem~\ref{G1}(ii) for $\ESMMb{\eps}{\{b_\ell\}}$, we see that there exist $w_\eps\xrightarrow{g}\Xi$ such that
\begin{equation} \label{G3}
	\ESMMb{0}{\{b_\ell\}}(\Xi, M) = \lim_{\eps \to 0} \ESMMb{\eps}{\{b_\ell\}}(w_\eps).
\end{equation}
We notice that
\begin{align*}
	\EKWC{\eps}(u, w_\eps) 
	&= \sigma \int_{\ringM} w^2_\eps |u_x| + \ESMM{\eps}(w_\eps) \\
	&= \sigma \int_{\ringM\setminus \Sigma} w^2_\eps |u_x| + \sum^\infty_{\ell=1} b_\ell w_\eps(a_\ell)^2 + \ESMM{\eps}(w_\eps) \\
&= \sigma \int_{\ringM\setminus \Sigma} w^2_\eps |u_x| + \ESMMb{\eps}{\{b_\ell\}}(w_\eps).
\end{align*}
By construction $w_\eps$ is bounded and $w_\eps \to 1$ almost everywhere in the sense of all continuous measure.
Since $w_\eps^2-1$ tends to zero for all $x$ outside $\Sigma$ and it is bounded, 
the first term in the right-hand side converges to $\sigma \int_{M\setminus \Sigma} \abs{u_x}$ by a bounded convergence theorem.
 The convergence \eqref{G3} yields the desired result.
\end{enumerate}
\end{proof}
\section{Acknowledgement}
The work of the first author was partly supported by Japan~Society~for~the~Promotion~of~Science (JSPS) through the grants KAKENHI No.~26220702, No.~19H00639, No.~18H05323, No.~17H01091, No.~16H03948 and by Arithmer,~Inc. through collaborative grant. The work of the second author was partly supported by the Leading~Garduate~Program ``Frontiers of Mathematical Sciences and Physics,'' JSPS.

\end{document}